\newcommand \numberthis{\addtocounter{equation}{1}\tag{\theequation}}
\newcommand{\ncom}{\newcommand}
\ncom{\vone}{\vskip 2ex}
\ncom{\vtwo}{\vskip 4ex}
\ncom{\ba}{\begin{align}}
\ncom{\ea}{\end{align}}
\ncom{\bans}{\begin{align*}}
\ncom{\eans}{\end{align*}}
\ncom{\beq}{\begin{equation}}
\ncom{\eeq}{\end{equation}}
\ncom{\beqs}{\begin{equation*}}
\ncom{\eeqs}{\end{equation*}}
\ncom{\st}{\stackrel{\mathscr{L}}{=}}
\ncom{\isn}{\sum_{i=1}^{n}}
\ncom{\jsn}{\sum_{j=1}^{n}}
\ncom{\isy}{\sum_{i=1}^{\infty}}
\ncom{\jsy}{\sum_{j=1}^{\infty}}
\ncom{\eps}{\varepsilon}
\ncom{\sgm}{\sigma}
\ncom{\sgms}{\sigma^2}
\ncom{\Sgm}{\Sigma}
\ncom{\psgm}{\sigma^{\prime}}
\ncom{\nin}{\noindent}
\ncom{\noi}{\noindent}
\ncom{\non}{\nonumber}
\ncom{\al}{\alpha}
\ncom{\bt}{\beta}
\ncom{\Th}{\theta}
\ncom{\e}{\eta}
\ncom{\Ch}{\chi^2}
\ncom{\ga}{\gamma}
\ncom{\Ga}{\Gamma}
\ncom{\dt}{\delta}
\ncom{\Dt}{\Delta}
\ncom{\lmd}{\lambda}
\ncom{\Lmd}{\Lambda}
\ncom{\omg}{\omega}
\ncom{\Omg}{\Omega}
\ncom{\bga}{\boldsymbol{\gamma}}
\ncom{\blmd}{\boldsymbol{\lambda}}
\ncom{\bLmd}{\boldsymbol{\Lambda}}
\ncom{\bDt}{\boldsymbol{\Delta}}
\ncom{\bSgm}{\boldsymbol{\Sigma}}
\ncom{\bmu}{\boldsymbol{\mu}}
\ncom{\bTh}{\boldsymbol{\theta}}
\ncom{\beps}{\boldsymbol{\varepsilon}}
\ncom{\bPsi}{\boldsymbol{\Psi}}
\def \C{{{\rm I{\!\!\!}\rm C}}}
\def\N{{\mathbb N}}
\def\C{{\mathbb C}}
\ncom{\V}{\operatorname{Var}}
\ncom{\CV}{\operatorname{Cov}}
\ncom{\CR}{\operatorname{Corr}}
\def\E{{\mathbb E}}
\def\Z{{\mathbb Z}}
\def\rec#1{\frac{1}{#1}}
\numberwithin{equation}{section}
\def\theequation{\thesection.\arabic{equation}}
\newtheorem{theorem}{\bf Theorem}[section]
\newtheorem{remark}{\bf Remark}[section]
\newtheorem{remarks}{\bf Remarks}[section]
\newtheorem{proposition}{Proposition}[section]
\newtheorem{lemma}{Lemma}[section]
\newtheorem{definition}{Definition}[section]
\newtheorem{defn}{Definition}[section]
\begin{document}

\hfill \today
\vtwo
\title{A Unified Approach to Compound Poisson Process and its Time  fractional Versions}
\author{Palaniappan Vellaisamy and Tomoyuki Ichiba} 
\address{Department of Statistics and Applied Probability, UC Santa Barbara,
	Santa Barbara, CA, 93106, USA.}
\address{Department of Statistics and Applied Probability, UC Santa Barbara,
Santa Barbara, CA, 93106, USA.}
\email{pvellais@ucsb.edu, ichiba@pstat.ucsb.edu}
\keywords{Adomian polynomials, Poisson distribution of order $n$, probabilistic method, recurrence relations}
\date{}
\subjclass[2010]{Primary: 65L99; Secondary: 93E25, 60E05.}

\begin{abstract} We consider a weighted sum of a series of independent Poisson random variables and show that it results in a  new compound Poisson distribution which includes the Poisson distribution and Poisson distribution of order $k$ and Poisson distribution of order infinity.  An explicit representation for its distribution is obtained in terms of Bell polynomials. We then extend it to a  compound Poisson process and time fractional compound Poisson process (TFCPP). It is shown that the one-dimensional distributions of the TFCPP  exhibit over-dispersion property, are not infinitely divisible and possess the long-range dependence property. Also, their moments and factorial moments are derived. Finally, the fractional differential equation associated with the TFCPP is also obtained. Some possible applications to insurance are pointed out.

\end{abstract}

\maketitle

\section{Introduction}
In recent decades, the classical Poisson process, negative binomial process
and gamma process have been generalized to various forms of their fractional versions such as
fractional Poisson process (FPP), fractional negative binomial processes (FNBP) and fractional gamma process (FGP);
see, for example, Laskin \cite{Lask03}, Begin and Orsingher \cite{BegOrs09}, 
Begin \cite{Beg13}, Mainardi {\it et al.} \cite{MGS04}, Begin and Macci  \cite{BegMac14}, Meerschaert {\it et al.} \cite{MNV09}, Meerschaert {\it et al.} \cite{MNV11}, Vellaisamy and Maheswari \cite{PVAM18}, Khandakar {\it et al.}
\cite{KKV25} and Kataria and Khandakar \cite{KKMK22}. These time and space fractional versions have heavy-tailed distributions, non-exponential waiting times and long-range dependence properties; see Biard and Saussereau \cite{BDSS14}, and Maheshwari and Vellaisamy \cite{MV16}.  These characteristics make these processes more suitable than the classical Levy processes (see Applebaum \cite{AB09}) for modeling various phenomena that arise in many disciplines such as finance, hydrology, atmospheric science, etc. (see Laskin \cite{Lask09}).
The time fractional Poisson process which initially derived from certain fractional differential equations (see Laskin \cite{Lask03}, Begin and Orsingher \cite{BegOrs09}) can also be viewed as a Poisson process subordinated to inverse stable subordinator (see Meerschaert {\it et al.} \cite{MNV11}).
 This approach initiated  the study of various subordinated processes leading to time-fractional
 and space fractional  versions; some references, among others, are Orsingher and Polito \cite{OrsPoli12}, Maheswari and Vellaisamy \cite{MV19}, Leonenko {\it et al.} \cite{LMS14}  and Begin and Vellaisamy \cite{BegVel18}.
 
 \vspace{.3cm} \noi In this paper, we look at the sum of a series of independent weighted Poisson random variables (rvs), which leads to a new compound Poisson distribution (CPD). By suitably choosing the associated sequence of parameters, we show that the CPD includes the Poisson distribution, the Poisson distribution of order $k$ and infinity, and so its study leads to a unified approach. Replacing Poisson rvs by Poisson processes, we extend it to a new compound Poisson process (CPP). Later, as a natural extension, we consider its time fractional version and call it TFCPP.
 
 \vspace{.3cm} \noi In Section 2, we introduce the notations and required preliminary results
 and the CPD is introduced and studied in Section 3. In Section 4, the associated CPP
 is studied and its time fractional version TFCPP  is investigated in Section 5.
 In particular, the one-dimensional distributions of the TFCPP have the over-dispersion
 property, are not infinitely divisible, and possess the long-range dependence property.
 The moments and the factorial moments of the TFCPP  are derived. Finally, its factional differential
 equation satisfied by the TFCPP is also derived.

\section{Preliminaries}\label{secprelim}
\noindent In this section, we introduce the notation and results that will be used later.
We start with some special functions that will be required later.

\subsection{Some special functions}

\begin{defn}\label{p} \em  (i): The one parameter Mittag-Leffler function $M_{\beta}(z)$ is defined as (see \cite{erde3})
	\begin{equation}\label{mlf22}
		M_{\alpha}(z)=\sum\limits_{k=0}^{\infty}\frac{z^{k}}{\Gamma(\alpha k + 1)},\,\,\,\alpha, z\in \C \text{ and Re}(\alpha)>0.
	\end{equation}
(ii):	For $z \in \mathbb{C}$, the two parameter Mittag-Leffler  function is defined as 
	\begin{equation}\label{mlf22a}
		M_{\alpha,\beta}(z)=\sum_{k=0}^{\infty} \frac{z^k}{\Gamma(\alpha k + \beta)}, ~~~~\text{$\alpha > 0, \beta > 0.$}
	\end{equation}
\end{defn}
\noindent
When $\beta = 1, M_{\alpha,1}$ reduces to the one-parameter Mittag-Leffler function
$ M_{\alpha}(z).$ \\
(iii): The generalized Mittag-Leffler function (Prabhakar \cite{PB71}) is defined as
\begin{equation}\label{k}
	M_{\alpha,\beta}^\gamma (z)=  \frac{1}{\Ga(\ga)}\sum_{k=0}^{\infty} \frac{\Gamma(\gamma+k)}{k!\Gamma(\alpha k + \beta)}{z^k},~~~\text{$\alpha, \beta, \gamma, z \in \mathbb{C}$}
\end{equation}
where $Re(\alpha), Re(\beta), Re(\gamma) > 0$. Note that $	M_{\alpha,\beta}^\gamma (0)=
1/\Ga(\bt),$ using the convention $0^0=1.$

Let $M_{\alpha,\beta}^{(n)}(x)$ denote the $n$-th derivative of the two-parameter Mittag-Leffler function
defined in \eqref{mlf22a}. Then (see Kataria and Vellaisamy \cite{KV19}, p.~117)
\begin{equation}\label{2.5}
	M_{\alpha,\beta}^{(n)}(x)=n! M_{\alpha,n\alpha+\beta}^{n+1}(x),\quad n\ge 0.
\end{equation}
Also, the $n$-th derivative of $ M_{\alpha}(z)$ satisfies
\begin{equation}\label{2.5a}
\alpha  M_{\alpha}^{(n)}(-\lambda t^{\alpha})= M_{\alpha,\alpha }^{(n-1)}(-\lambda t^{\alpha}), \quad n \geq 1.
\end{equation}

\noindent The Mittag-Leffler $ ML (\alpha,  \lambda) $ distribution is introduced and studied by Pillai \cite{Pil90}. Its distribution function is given by
\begin{equation}\label{mld1}
	F(t \mid \alpha, \lambda)= 1- M_{\alpha}(-\lambda t^{\alpha})=\sum_{k=0}^{\infty} (-1)^{k+1} \frac{(\lambda t^\alpha)^k}{\Gamma(\alpha k+1)}, ~ t>0,
\end{equation}
where $ 0 < \alpha \leq 1$ and $\lambda >0$ are the parameters.

\noi Let $M_{\alpha}^{'}(x)=M_{\alpha}^{(1)}(x)$ denote the first derivative. Then the density of $ML (\alpha,  \lambda) $ is given by
\begin{align}\label{mld2}
	f(t \mid \alpha, \lmd)=& \alpha \lambda  t^{\alpha-1} M_{\alpha}^{'}(-\lambda t^{\alpha})	\nonumber\\	
	=& \lambda  t^{\alpha-1} M_{\alpha,\alpha }(-\lambda t^{\alpha}),		
\end{align}
using  \eqref{2.5a}. 

\noi Let a random variable (rv) $X \sim ML ( \alpha,  \lambda)$. The its Laplace transform  is
$$ \mathbb E(e^{-sX})= \frac{\lambda}{\lambda + s^ \alpha}.$$
If  $T=  X_1 + \cdots + X_n$, where $X_i$'s are independent and identically distributed (i.i.d.) 
$ML (\alpha, \lambda)$ random variables (rvs), then the density of $T$ (see Kataria and Vellaisamy \cite{KV19}) is
\begin{align}\label{mld3}
	f(t\mid\alpha, \lambda)=& \, \frac{\lambda ^n }{(n-1)!}
	t^{\alpha n -1} M_{\alpha,\alpha }^{(n-1)}(-\lambda t^{\alpha}) \non	\\	
    =& \, \frac{\alpha\lambda ^n }{(n-1)!}
    t^{\alpha n -1} M_{\alpha }^{(n)}(-\lambda t^{\alpha}),
\end{align} 
using \eqref{2.5a}.

\begin{definition}  \em
	\noindent  (i): The Wright function $W _{\alpha, ~\beta }(z)$ is defined for $\alpha > -1$ and $\beta \in \mathbb{C}$, as
	\begin{align} \label{eq22a}
		W _{\alpha, \beta }(z)= {\sum_{n=0}^{\infty}
			\frac{z^n }{ n! \Gamma(\alpha n + \beta )}},
	\end{align}
	which converges in the whole complex plane.
	
	\noi (ii):  A particular case of the Wright function, called {the $M$-Wright function} $W_{\beta}(z)$ is defined as
	\begin{align} \label{eq22}
		W_{\beta}(z) = &  \,  W _{-\beta, 1-\beta }(-z)
		={\sum_{n=0}^{\infty}
			\frac{(-z)^n }{ n! \Gamma(-\beta n + (1-\beta ))}} \\
		= & \, {\rec{\pi} \sum_{n=1}^{\infty}\frac{(-z)^{n-1} }{(n-1)!}
			\Gamma(\beta n)  \sin (\pi\beta n)}, \nonumber
	\end{align}
	which converges for  $z\in \mathbb{C}$ and $0<\beta<1$. 
	
\end{definition}

\noi  The following results are well known (see  Kilbas {\t et al.} \cite{KST06}, Kataria and Vellaisamy \cite{KV18}).
\begin{definition}  \em (i): 
	Let $D_{\beta}(t)$ be the $\beta$-stable subordinator. Then the density of $D_{\beta}(t)$ is
	\begin{equation}\label{stab-den}
		g_{_{\beta}}(x;t)=\beta tx^{-(\beta+1)}W_{\beta}(tx^{-\beta}), ~~~x>0.
	\end{equation}

(ii):	Let $E_{\beta}(t)$ be the inverse $\beta$-stable subordinator. Then the density of $E_{\beta}(t)$ is 
	\begin{equation}\label{instab-den}
		h_{\beta}(x;t)=t^{-\beta}W_{\beta}(t^{-\beta}x),~~~x>0.
	\end{equation}
\end{definition}

\subsection{Some fractional derivatives} 
Let $AC[a,b]$ be the space of functions $f$ which are absolutely continuous on $[a,b]$ and 
\begin{equation*}
	AC^{n}[a,b]=\Big\{f:[a,b]\rightarrow \mathbb{R}; f^{(n-1)}(t)\in AC[a,b]\Big\},
\end{equation*}
where $AC^{1}[a,b]=AC[a,b]$.

\vone
\noi Henceforth, let $ \Z_{+}= \{0, 1, \ldots \}$ and $ \N = \{1, 2, \ldots \}$ be the set
of nonnegative integers and the set of positive integers respectively.  

\begin{definition} \em
	Let $n\in \N$,  $\beta \geq 0$ and $f \in AC^{n}[0,T]$. Then \\ 
	(i): The (left-hand) Riemann-Liouville (R-L) fractional derivative  $\mathbb{D}_t^{\beta} f $ of $f$ (see \cite[Lemma 2.2]{KST06}) is defined by  (with $\mathbb{D}^{0} f=f$), for $t \in [0, T]$,
	\begin{equation}\label{rld}
		\mathbb{D}_t^{\beta}f(t):= \begin{cases} 
			\hfill \dfrac{1}{\Gamma(n-\beta)}\dfrac{d^{n}}{dt^{n}}\displaystyle\int_{0}^{t}\dfrac{f(s)}{(t-s)^{\beta-n+1}}ds, \hfill    &  \text{if}~~ n-1<\beta < n, \\ & \\
				f^{(n)}(t), \quad  \text{if}~~ \beta= n. &
		\end{cases}
	\end{equation}
(ii): The (left-hand) Caputo fractional derivative $\partial^{\beta} f$ of $f$ (see  Kilbas {\it et al.} \cite[Theorem 2.1]{KST06}) is defined by (with $\partial^{0}f=f$), for $t \in [0, T]$,
 \begin{equation}\label{cd}
 	\partial_t^{\beta}f(t):= \begin{cases} 
 		\hfill \dfrac{1}{\Gamma(n-\beta)}\displaystyle\int_{0}^{t}\dfrac{f^{(n)}(s)}{(t-s)^{\beta-n+1}}ds, \hfill    & \text{if}~~ n-1<\beta < n \\ &\\
 		f^{(n)}(t), \quad \text{if}~~ \beta= n.&
 	\end{cases}
\end{equation}
\end{definition}

\noindent The relation between the R-L fractional derivative and the Caputo fractional derivative is (see Kilbas {\it et al.} \cite[eq. (2.4.6)]{KST06}), 
\begin{equation*}
	\mathbb{D}_t^{\beta}f(t)	= \partial_t^{\beta}f(t)    +\displaystyle\sum\limits_{k=0}^{n-1}\frac{t^{k-\beta}}{\Gamma(k-\beta+1)}f^{(k)}(0^{+}),~n-1<\beta \le n,
\end{equation*}
where $f^{(k)}(0^+):=\lim_{t\rightarrow 0^+} \frac{d^{k}}{dt^{k}}f(t)$.

\vone
\subsection{Poisson Distribution of Order $k$}
We denote hereon the Poisson distribution with parameter $\lmd$ by  Poi($\lmd$).   Let $\{N_{j}\}, j \geq 1,$ where $N_{j}\sim\text{Poi}(\lmd_{j})$, be a sequence of  independent Poisson random
variables   with probability distribution 
\begin{equation*}
\mathbb{P}\{N_{j}=n\}=\frac{e^{-\lmd_{j}}\lmd_{j}^{n}}{n!},~n\in \mathbb{Z_{+}}, ~ j \in \N.
\end{equation*}%
 It is well known that the Poisson family is stable under convolution,
that is, the sum
\begin{equation*}
S_{k}=\sum_{j=1}^{k}N_{j}\sim \text{Poi}(\lmd_{1}+\cdots +\lmd_{k}), \quad k \geq 1,
\end{equation*}%
the Poisson distribution with mean $\lmd_{1}+\cdots +\lmd_{k}$, for all $k \ge 1.$

\noi For $ k \geq 1$,  consider now the
random variable $W_{k}=N_{1}+2N_{2}+\cdots + k N_{k}$, a weighted sum of Poisson rvs. Its
distribution is given by, for $n\in \mathbb{Z_{+}}$, 
\begin{align}
\mathbb{P}\{W_{k}=n\}& =\sum_{\big\{\sum\limits_{j=1}^{k}jx_{j}=n\big\}}\mathbb{P}\{N_{1}=x_{1},\cdots ,N_{k}=x_{k}\}  \notag \\
& =\sum_{\big\{\sum\limits_{j=1}^{k}jx_{j}=n\big\}}\prod_{j=1}^{k}\mathbb{P}\{N_{j}=x_{j}\}  \notag
\\
& =e^{-(\lmd_{1}+\cdots +\lmd_{k})}\sum_{\big\{ \sum\limits_{j=1}^{k}jx_{j}=n \big\}
}\Big( \frac{\lmd_{1}^{x_{1}}\cdots \lmd_{k}^{x_{k}}}{x_{1}!\ldots x_{k}!}\Big),  \label{eqn214}
\end{align}
where we have used the  independence of the $X_i$'s in the second line above. The probability distribution given
in \eqref{eqn214} is called the Poisson distribution of order $k$ (see Philippou {\it et al.} \cite
{Philip88}) and is denoted by Poi ($\lmd_{1},\ldots , \lmd_{k})$. Thus, it follows
that 
\begin{equation} \label{n216}
W_{k}=\sum_{j=1}^{k}jN_{j}\sim \text{Poi}(\lmd_{1},\ldots , \lmd_{k}), \quad k \geq 1,
\end{equation}
 Recently, it has been shown (see \cite{PVFV20},  \cite{PVVK22}) that the Poisson distribution of order $k$ 
 plays a crucial role in the probabilistic analysis of both the classic
 and the non-classic  Adomian polynomials and especially in the derivation of
 recurrence relations.

\subsection{Bell Polynomials} The following definitions and results of Bell polynomials will be required later.

\noindent
\subsubsection{Ordinary Bell polynomials.} 
Let $c_j$'s denote nonnegative integers. For $ 1 \leq k \leq n$, define
\begin{align*}
	\Lambda_{k,n}= \Big\{(c_1,c_2,\ldots,c_n):c_{1}+\cdots+c_{n}=k;~c_{1}+2c_{2}+\cdots+nc_{n}=n \Big \}
\end{align*}
and
\begin{align*}
	\Lambda_{k,n}^{*}= \Big\{(c_1,c_2,\ldots,c_{n-k+1}):c_{1}+\cdots+c_{n-k+1}=k;~c_{1}+2c_{2}+\cdots+(n-k+1)c_{n-k+1}=n\Big \}.
\end{align*}

\nin Note that, for fixed $ 1 \leq k \leq n$,
\begin{align} \label{eqn217b}
   (c_1, \ldots, c_{n-k+1}) \in \Lambda_{k,n}^{*} \iff (c_1, \ldots, c_{n-k+1}, 0, \ldots, 0) \in \Lambda_{k,n}.
\end{align}

\nin Also,  for $n\geq1$, let
\begin{align} \label{eqn217}
	\Lambda_{n}=\Big\{(c_1,c_2,\ldots,c_n):c_{1}+2c_{2}+\cdots+nc_{n}=n\Big \}.
\end{align}
\noi The  ordinary partial Bell polynomials are defined by
\begin{align} \label{bp2}
	\hat{B}_{n,k}(u_{1},\dots,u_{n-k+1}) 
	=\sum_{\Lambda_{k,n}^*}{\frac{k!}{c_{1}!\cdots c_{n-k+1}!}}{{u_{1}}}^{c_{1}}\cdots{{u_{n-k+1}}}^{c_{n-k+1}}
\end{align}
or equivalently
\begin{align} \label{219}
	\hat{B}_{n,k}(u_{1},\dots,u_{n})= \sum_{\Lambda_{k,n}}{\frac{k!}{c_{1}!\dots c_{n}!}}{{u_{1}}}^{c_{1}}\dots{{u_{n}}}^{c_{n}},
\end{align}
in view of \eqref{eqn217b}.

\noi The  ordinary  Bell polynomials are defined by
\begin{align} \label{bp4}
  	\hat{B}_{n}(u_{1},\dots,u_{n})= \sum_{k=1}^{n}\hat{B}_{n,k}(u_{1},\dots,u_{n}),\quad n\geq 1. 
\end{align}

\noi The following results are well known (see Comtet \cite{CT74}, pp. 133-137):
\begin{align*}
	\exp \Big(x \sum_{j=1}^{\infty} \frac{u_{j} t^{j}}{j!}\Big)=  1+\sum_{n=1}^{\infty} \frac{t^{n}}{n!}\Big( \sum_{k=1}^{n} \hat{B}_{n, k}\Big(u_{1}, u_{2}, \ldots, u_{n-k+1}\Big)x^{k}\Big)
\end{align*}
and also for the series expansion of the $k$-th power, for $ k \geq 1,$ 
\begin{equation}\label{221}
	\Big(\sum_{j=1}^{\infty} u_{j} t^{j}\Big)^{k}=\sum_{n=k}^{\infty} \hat{B}_{n, k}\Big(u_{1}, u_{2}, \ldots, u_{n-k+1}\Big) t^{n}.
\end{equation}

\noi \subsubsection{Partial  exponential Bell polynomials.}
The partial or incomplete exponential Bell polynomials are a triangular array of polynomials given by
\begin{align*}
	B_{n,k}\Big(u_{1},u_{2},\ldots,u_{n-k+1}\Big)= &\sum_{\Lambda_{k, n}^{*}}\frac{n!}{c_{1}!c_{2}!\cdots c_{n-k+1}!}\Big(\frac{u_{1}}{1!}\Big)^{c_{1}}\Big(\frac{u_{2}}{2!}\Big)^{c_{2}} \cdots\Big(\frac{u_{n-k+1}}{(n-k+1)!}\Big)^{c_{n-k+1}} \\
\end{align*}
or compactly
\begin{align*}
B_{n,k}\Big(u_{1},u_{2},\ldots,u_{n}\Big)=\sum_{\Lambda_{k, n}}\binom{n}{c_1,c_2,\ldots,c_n}\Big(\frac{u_{1}}{1 !}\Big)^{c_{1}}\Big(\frac{u_{2}}{2!}\Big)^{c_{2}} \cdots\Big(\frac{u_{n}}{n!}\Big)^{c_{n}},
\end{align*}
where $B_{0,0}=1$, $B_{0,k}=0~,k\geq 1$. 
Here, $\binom{n}{c_1,c_2,\ldots,c_n} $ denotes the multinomial coefficient.

\noi  Let $\Lmd_n$ be defined as in  \eqref{eqn217}. Then the sum
\begin{align}\label{222}
	B_{n}\Big(u_{1}, \ldots, u_{n}\Big)= &\sum_{k=1}^{n} B_{n, k}\Big(u_{1}, u_{2}, \ldots, u_{n}\Big) \nonumber \\
      = & \sum_{\Lambda_{n}} \binom{n}{c_1, c_2, \ldots, c_n}
     \Big(\frac{u_{1}}{1 !}\Big)^{c_{1}}\Big(\frac{u_{2}}{2 !}\Big)^{c_{2}} \cdots\Big(\frac{u_{n}}{n!}\Big)^{c_{n}},
\end{align}
with $B_0=1$, is called {$n$-th complete exponential Bell polynomial}. 
 Henceforth,  Bell polynomials always refer to exponential Bell polynomials, unless
stated otherwise.

\noi The relation between ordinary Bell polynomials and exponential Bell polynomials is
\begin{equation}\label{eqn312}
	\hat{B}_{n,k}\Big(u_{1},u_{2},\ldots,u_{n}\Big)=\frac{k!}{n!}B_{n,k}\Big(1!u_{1},2!u_{2},\ldots,n!u_{n}\Big).
\end{equation}

\noindent
The following result from Johnson \cite[p.~220]{John02} is useful. Let $|A|$ denote the determinant of the matrix $A.$
\begin{lemma}  \em \label{det}
	If $n \geq 1$, then
	\begin{equation*}
		B_{n}\Big(u_{1}, u_{2}, \ldots, u_{n}\Big)=
		\begin{vmatrix}
			u_1&\binom{n-1}{1}u_2&\binom{n-1}{2}u_3&\cdots&\binom{n-1}{n-2}u_{n-1}&u_n\\
			&&&&&\\
			-1&u_1&\binom{n-2}{1}u_2&\cdots&\binom{n-2}{n-3}u_{n-2}&u_{n-1}\\
			&&&&&\\
			0&-1&u_1&\cdots&\binom{n-3}{n-4}u_{n-3}&u_{n-2}\\
			\vdots&\vdots&\vdots&\vdots&\vdots&\vdots&\\
			0&0&0&\cdots&u_1&u_2\\
			0&0&0&\cdots&-1&u_1\\
		\end{vmatrix}.
	\end{equation*}
	\noi  Note in the above matrix, all the entries on the main subdiagonal are $-1$, and all entries below it are 0 .
\end{lemma}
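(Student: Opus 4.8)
The plan is to prove the identity by induction on $n$, showing that the Hessenberg determinant obeys the same first-order recurrence as the complete exponential Bell polynomial $B_n$. Starting from the standard exponential generating function $\exp\big(\sum_{j\ge 1}u_j t^j/j!\big)=\sum_{n\ge 0}B_n(u_1,\dots,u_n)\,t^n/n!$ (Comtet \cite{CT74}), I would differentiate in $t$, use $F'=G'F$ with $G'(t)=\sum_{m\ge 0}u_{m+1}t^m/m!$, and match the coefficient of $t^{n-1}/(n-1)!$ to obtain
\begin{equation}
B_n(u_1,\dots,u_n)=\sum_{m=0}^{n-1}\binom{n-1}{m}u_{m+1}\,B_{n-m-1}(u_1,\dots,u_{n-m-1}),\qquad B_0=1.
\tag{$\star$}
\end{equation}
It then suffices to prove that the determinant $D_n:=|H_n|$, where $H_n=(a_{ij})$ has $a_{ij}=\binom{n-i}{j-i}u_{j-i+1}$ for $i\le j$, $a_{i,i-1}=-1$, and $a_{ij}=0$ for $i>j+1$, satisfies $(\star)$ with $D_0:=1$; the base case $D_1=u_1=B_1$ is immediate.

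Next I would compute $D_n$ by cofactor expansion along the first column. Since this column is $(u_1,-1,0,\dots,0)^{\!\top}$, and the cofactor signs $(-1)^{1+1}=+1$ and $(-1)(-1)^{2+1}=+1$ make both surviving terms positive,
\[
D_n=u_1\,\bigl|H_n^{(1,1)}\bigr|+\bigl|H_n^{(2,1)}\bigr|,
\]
where $H_n^{(i,1)}$ is the minor deleting row $i$ and column $1$. The crucial observation is the reindexing of the binomial coefficients: after deleting the first row and column, the surviving entry originally at $(i,j)$ becomes, in the shifted indices $i'=i-1,\,j'=j-1$, equal to $\binom{(n-1)-i'}{j'-i'}u_{j'-i'+1}$, which is exactly the $(i',j')$-entry of $H_{n-1}$. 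Hence $\bigl|H_n^{(1,1)}\bigr|=D_{n-1}$, and this is precisely where the $n$-dependence of the binomial coefficients collapses correctly onto the smaller Hessenberg determinant. The remaining minor $H_n^{(2,1)}$ is again upper Hessenberg with $-1$ on its subdiagonal, but now carries the distinguished first row $\big(\binom{n-1}{1}u_2,\binom{n-1}{2}u_3,\dots,\binom{n-1}{n-1}u_n\big)$ sitting atop a standard Hessenberg tail.

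The main technical step is to organize the iterated peeling. For $0\le m\le n-1$ define the $(n-m)\times(n-m)$ matrix $M^{(m)}$ whose first row is $\big(\binom{n-1}{m}u_{m+1},\binom{n-1}{m+1}u_{m+2},\dots,\binom{n-1}{n-1}u_n\big)$ and whose remaining rows form the standard $(-1)$-subdiagonal Hessenberg block, so that $M^{(0)}=H_n$. Expanding $M^{(m)}$ along its first column $\big(\binom{n-1}{m}u_{m+1},-1,0,\dots,0\big)^{\!\top}$ gives $\bigl|M^{(m)}\bigr|=\binom{n-1}{m}u_{m+1}\,D_{n-m-1}+\bigl|M^{(m+1)}\bigr|$, where the identification of the $(1,1)$-minor with $D_{n-m-1}$ uses the same binomial reindexing as above and the second term is positive for the same sign reason. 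Telescoping from $m=0$ down to the terminal case $\bigl|M^{(n-1)}\bigr|=\binom{n-1}{n-1}u_n\,D_0=u_n$ yields $D_n=\sum_{m=0}^{n-1}\binom{n-1}{m}u_{m+1}D_{n-m-1}$, which is exactly $(\star)$; the inductive hypothesis $D_k=B_k$ for $k<n$ then closes the argument. I expect the only real care to be needed in verifying that every term of this telescoping carries a $+$ sign and that each successive $(1,1)$-minor is genuinely the lower-order determinant $D_{n-m-1}$ rather than merely a similar-looking matrix; both points reduce to the elementary reindexing $\binom{n-i}{j-i}\mapsto\binom{(n-m-1)-i'}{j'-i'}$ noted above.
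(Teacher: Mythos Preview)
Your proof is correct. The paper itself does not prove this lemma at all: it simply quotes the determinant formula from Johnson \cite[p.~220]{John02} as a known result. Your argument supplies what the paper omits, and does so cleanly: you derive the standard recurrence $B_n=\sum_{m=0}^{n-1}\binom{n-1}{m}u_{m+1}B_{n-m-1}$ from the exponential generating function, and then verify via iterated first-column expansion that the Hessenberg determinant obeys the same recurrence with the same initial value. The key reindexing observation --- that deleting the first row and column of any of your matrices $M^{(m)}$ shifts the binomial coefficients $\binom{n-i}{j-i}$ down to those of $H_{n-m-1}$ --- is correctly identified and correctly used, and your sign bookkeeping for the two nonzero cofactors is right. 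One small presentational point: when you say the remaining rows of $M^{(m)}$ ``form the standard $(-1)$-subdiagonal Hessenberg block,'' it would be slightly clearer to state explicitly that these are rows $2,\dots,n-m$ of $H_{n-m}$, since that is exactly what drives the identification of the $(1,1)$-minor with $D_{n-m-1}$ at each step.
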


\corollary  \em  Let $u_j=0$ for $j\ge 2$. Then 
by using Lemma \ref{det}
\begin{align*}
	B_n(u_1,0,0,\ldots,0)= &\begin{vmatrix}
		u_1&0&0&\cdots&0&0\\
		-1&u_1&0&\cdots&0&0\\
		0&-1&u_1&\cdots&0&0\\
		\vdots&\vdots&\vdots&\vdots&\vdots&\vdots&\\
		0&0&0&\cdots&u_1&0\\
		0&0&0&\cdots&-1&u_1
	\end{vmatrix} \\
	=& \, u_1^n.
\end{align*}

 \subsubsection{ A Probabilistic Formula.} In this subsection, we present a probabilistic approach to compute the Bell polynomials. Let $X_j \sim \text{Poi} (\lambda_j) $  be a sequence of independent Poisson rvs and $ u_j = j! \lmd_j,  j \geq 1$.  As before, let $S_n= X_1 + \cdots +  X_n$ and $W_n= X_1 + 2X_2+ \cdots +  nX_n$. 
Then by definition
\begin{align*}
	B_{n,k}(u_{1},\dots,u_{n}) &= \sum_{\Lambda_{k,n}}\binom{n}{c_1,\ldots, c_n} \Big( {\frac{u_{1} }{1! }} \Big)^{c_{1} }\dots
	\Big( {\frac{u_{n}}{n!}}\Big)^{c_{n}} \\
	&= n! \sum_{\Lambda_{k,n}} \dfrac{\lambda_1^{c_1}}{c_1!} \cdots \dfrac{\lambda_n^{c_n}}{c_n!} \\
	&= n! e^{\lambda_1 + \cdots + \lambda_n}\sum_{\Lambda_{k,n}}\dfrac{e^{-\lambda_1}\lambda_1^{c_1}  }{c_1!} \cdots \dfrac{e^{-\lambda_n}\lambda_n^{c_n}}{c_n!} \\
	&= \dfrac{n! \mathbb{P}\{S_n=k; W_n= n\}}{\mathbb{P}\{S_n=0\}}. \numberthis \label{eqn313}
\end{align*}
Using \eqref{222},  the complete Bell  polynomial is for $ n \geq 1$,
\begin{align*}
	B_n(u_1, \ldots, u_n)=& \sum_{k=1}^{n}	B_{n,k }( u _{1} , \dots, u _ {n} )\\
	= & \, n! \sum_{k=1}^{n} \dfrac{\mathbb{P}\{S_n=k; W_n= n\}}{\mathbb{P}\{S_n=0\}}\\
	=  & \, \frac{n!}{P\{S_n=0\} } \sum_{k=0}^{n}  P\{S_n=k; W_n= n \}\\
	=  & \, \frac{n! \mathbb{P}\{W_n= n \}}{\mathbb{P}\{S_n=0\} }, \numberthis \label{eqn314} 
\end{align*}
since $\mathbb{P}(S_n=0; W_n= n )=0$ for $ n \geq 1.$

\noi In other words,  
\begin{align} \label{eqn226}
	B_n(1!\lmd_1,2!\lmd_2,\ldots,n!\lmd_n)=\frac{n!\mathbb{P}\{W_n= n\}}{\mathbb{P}\{S_n=0\}}, 
\end{align}
where $X_j \sim \text{Poi} (\lambda_j)$.

\noi  For some additional details on the probabilistic connections to the Bell polynomials, see Kataria and Vellaisamy \cite{KVV22}.

\section{A Compound Poisson Distribution}
\noindent Let $\{N_j \}$ be a sequence of independent Poisson variables, where $\{N_j \} \sim \text{Poi} (\lambda_j), j \geq 1.$  Henceforth, we define $N_j \equiv 0 ~ a.s$ if $ \lmd_j=0,$, that is, 
a Poisson distribution with mean zero is defined to be the degenerate distribution at zero.
 Let  $ \blmd= \{ \lambda_j\}_{j \geq 1}$ be the sequence of
associated parameters such that $\dt=\sum\limits_{j=1}^{\infty}\lambda_{j} < \infty$ 
and $\sum\limits_{j=1}^{\infty}t^j\lambda_{j} < \infty,$ for   $ 0 < |t| < M,$  for some $M>0.$ We call the distribution of
\begin{equation} \label{neqn31}
G_P(\blmd)=\sum_{j=1}^{\infty}jN_{j},
\end{equation}
a generalized Poisson distribution (GPD). Note that $G_P(\blmd)= W_{\infty}$, see \eqref{n216}. Our generalization of the Poisson distribution is new and different from the ones considered in the literature; see, for instance, Consul and Jain \cite{CJ73} where  a two-parameter  generalization of the Poisson distribution is obtained as a limiting form of the generalized negative binomial distribution.

\begin{remarks} \em  (i) Let now $ \lmd_j =0 $ for $ j \geq 2.$ Then, obviously, $ G_P(\blmd) {=} N_1 \sim \text{Poi}(\lmd_1)$,
	the Poisson distribution with mean$\lmd_1.$
	
	\noi 	(ii) Let  $\lambda_j =0$, for $ j \geq k+1.$ Then, clearly, $G_P(\blmd){=} N_1+2N_2+ \ldots+ kN_k =W_k \sim \text{Poi} (\lmd_{1},\ldots, \lmd_{k})$, the Poisson distribution of order $k.$ 
\end{remarks}
\noi The study of the GPDs in \eqref{neqn31} provides a unified approach and has not been addressed in the literature. In view of the form of \eqref{neqn31}, the GPD may be viewed as the  Poisson distribution of order infinity.
First, we show that the series in the right-hand side of \eqref{neqn31} indeed follows a compound Poisson distribution. One could also apply Kolmogorov's three series theorem to check the convergence.

\vspace{.3cm}
\noi Let now $Y$ be a positive integer-valued random variable with the distribution
\begin{equation} \label{neqn32}
	\mathbb{P}\{Y=j\}= \frac{\lmd_j}{\dt}, ~j \in \N.
\end{equation}
where $\dt=\sum\limits_{j=1}^{\infty}\lambda_{j}.$
Observe that given a sequence $ \blmd$, satisfying the conditions given above, the distributions of $N\sim \text{Poi} ~(\delta)$ and $Y$ can be determined.

\begin{theorem} \em Let $G_P(\blmd)$ be the GPD defined in \eqref{neqn31} and $\{Y_j \}_{j \ge 1}$ be a sequence of i.i.d. positive-integer valued rvs with distribution defined in 
\eqref{neqn32}.	Also, let  $N\sim \text{Poi}(\dt)$ be a Poisson rv  with  mean $\dt$ and is independent of
the sequence $\{Y_j \}$. Then
\begin{equation}\label{neqn33}
\sum_{j=1}^{\infty}jN_{j}\stackrel{\mathscr{L}}{=} \sum_{j=1}^{N}Y_j.	
\end{equation}
\end{theorem}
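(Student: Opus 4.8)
The plan is to identify the two sides through their probability generating functions (PGFs), which is the cleanest route for nonnegative-integer-valued random variables, and to verify along the way that the infinite sum on the left is a genuine $\mathbb{Z}_{+}$-valued random variable. First I would check that $\sum_{j=1}^{\infty} j N_j$ is almost surely finite. Since $\delta = \sum_{j=1}^{\infty}\lambda_j < \infty$, we have $\sum_{j=1}^{\infty}\mathbb{P}\{N_j \ge 1\} \le \sum_{j=1}^{\infty}\lambda_j = \delta < \infty$, so by the Borel--Cantelli lemma only finitely many $N_j$ are nonzero almost surely; hence $\sum_{j=1}^{\infty} j N_j$ is almost surely a finite sum of finite terms and defines a $\mathbb{Z}_{+}$-valued random variable. (This is the probabilistic counterpart of the remark about Kolmogorov's three series theorem.)

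Next, for $s \in [0,1]$ I would compute the PGF of the right-hand side by conditioning on $N$. Writing $\varphi_Y(s) = \mathbb{E}[s^{Y_1}] = \sum_{j=1}^{\infty} s^j \lambda_j/\delta$, the i.i.d. structure of the $Y_j$ together with their independence of $N$ gives $\mathbb{E}\big[s^{\sum_{j=1}^{N} Y_j}\big] = \mathbb{E}[\varphi_Y(s)^{N}] = \exp(\delta(\varphi_Y(s)-1))$, using the Poisson PGF $\mathbb{E}[z^{N}] = e^{\delta(z-1)}$ for $N \sim \mathrm{Poi}(\delta)$. Substituting $\varphi_Y$ collapses the right-hand PGF to $\exp\big(\sum_{j=1}^{\infty}\lambda_j s^j - \delta\big)$.

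Then I would compute the PGF of the left-hand side. By independence of the $N_j$ and the Poisson PGF, $\mathbb{E}\big[s^{\sum_{j=1}^{m} j N_j}\big] = \prod_{j=1}^{m}\exp(\lambda_j(s^j-1))$ for each finite $m$. Because only finitely many $N_j$ are nonzero almost surely, the partial sums $\sum_{j=1}^{m} j N_j$ are eventually constant, so $s^{\sum_{j=1}^{m} j N_j}\to s^{\sum_{j=1}^{\infty} j N_j}$ pointwise and the integrand is bounded by $1$; dominated convergence then gives the left-hand PGF as $\lim_{m}\prod_{j=1}^{m}e^{\lambda_j(s^j-1)} = \exp\big(\sum_{j=1}^{\infty}\lambda_j(s^j-1)\big) = \exp\big(\sum_{j=1}^{\infty}\lambda_j s^j - \delta\big)$. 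The series $\sum_{j=1}^{\infty}\lambda_j s^j$ converges by the standing hypothesis $\sum_{j=1}^{\infty} t^j\lambda_j < \infty$ for $0<|t|<M$ (and trivially at $s=0$ and $s=1$), so both the infinite product and the exponent are finite. Since the two PGFs agree on $[0,1]$, their Taylor coefficients — hence the point probabilities — coincide, which yields $\sum_{j=1}^{\infty} j N_j \stackrel{\mathscr{L}}{=} \sum_{j=1}^{N} Y_j$.

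The main obstacle is the rigorous passage to the limit in the infinite product defining the left-hand PGF, equivalently the interchange of expectation with the infinite sum in the exponent; this is exactly where the summability conditions on $\boldsymbol{\lambda}$ (giving almost-sure finiteness via Borel--Cantelli and convergence of $\sum_{j}\lambda_j s^j$) are indispensable. Everything else — the compound-Poisson computation on the right and the matching of the two closed forms — is a direct calculation.
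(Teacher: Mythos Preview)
Your proof is correct and follows essentially the same route as the paper: both compute the PGF of the compound Poisson sum via $H_N(H_{Y_1}(z))$ and the PGF of $\sum_j jN_j$ via the product $\prod_j e^{\lambda_j(z^j-1)}$, and match the resulting expressions. Your version is in fact more careful, supplying the Borel--Cantelli argument for almost-sure finiteness and the dominated-convergence justification for the infinite product, steps the paper leaves implicit.
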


\begin{proof} First note that the probability generating function (PGF) of $N$ is 
\begin{equation*}
	H_{N}(z)=\mathbb{E}\Big( z^{N}\Big) =e^{-\dt(1-z)}, ~ z\in (0, 1).
\end{equation*}
Consider now the random sum $T_N=\sum_{j=1}^{N}Y_j.$ Then  the PGF of $T_N$ is 
\begin{align}
	H_{T_{N}}(z) 
	&= H_{N} (H_{Y_1}(z)) \nonumber \\
	&= e^{ -\dt (1-H_{Y_1}(z)) }\nonumber \\  
	&=\exp \Big({\dt \Big(\sum_{j=1}^{\infty}z^{j}\frac{\lmd_{j}}{\dt}- 1\Big)} \Big)\nonumber\\
	&=\exp\Big(\sum_{j=1}^{\infty}z^{j}\lmd_{j}-\dt \Big) \label{eqn33a} \\
	&=\exp\Big(\sum_{j=1}^{\infty}z^{j}\lambda_{j}-\sum_{n=1}^{\infty}\lambda_{j}\Big)\nonumber\\
	&=\exp\Big(\sum_{j=1}^{\infty}\lambda_{j}(z^{j}-1)\Big). \numberthis \label{neqn33a}
\end{align}
Also, the PGF of $ \sum_{j=1}^{\infty}jN_{j}$ is
\begin{align}
	\mathbb{E}\Big( z^{N_{1}+2N_{2}+\cdots}\Big) \non   = &\prod_{j=1}^{\infty}\mathbb{E}\Big( z^{jN_{j}}\Big) \non \\
	=&\prod_{j=1}^{\infty}\mathbb{E}\Big( (z^{j})^{N_{j}}\Big) \non  \\
	=&\prod_{j=1}^{\infty}e^{-\lambda_{j}(1-z^{j})}  \non\\
	=&\exp\Big({\sum_{j=1}^{\infty}\lambda_{j}(z^{j}-1)}\Big), \label{neqn34}
\end{align}
which coincides with \eqref{neqn33a}. This proves the result.
\end{proof}

\noi The above result motivates the following alternative definition.
\begin{definition} \em  {\bf A Compound Poisson Distribution.}
	Let $\{N_j \}$ be a sequence of Poisson rvs with parameters $\lmd_j$ and $\dt=\sum\limits_{j=1}^{\infty}\lambda_{j}$. Let  $\{Y_j \}$ be a sequence of i.i.d. rvs with $\mathbb{P}(Y_{1}=j)={\lambda_{j}}/{\dt}$ for $j\ge 1$. We call the distribution
	$ \sum_{j=1}^{N}Y_j =T_N,$
	where $N \sim \text{Poi}(\dt)$ and is independent of $Y_j$, the compound Poisson 
	distribution (CPD) and denote it by $C_P(\blmd).$
\end{definition}

\begin{remarks} \em  (i) Let $ \lmd_j =0 $ for $ j \geq 2.$ Then, obviously,  $\dt= \lmd_1$ and $T_N \stackrel{\mathscr{L}}{=} N_1 \sim \text{Poi}(\lmd_1)$,
	the Poisson distribution.
	
	\noi 	(ii) Let  $\lambda_j =0$, for $ j \geq k+1.$ Then $\dt= \dt_k= \sum_{j=1}^{k} \lmd_j$ and $T_N \stackrel{\mathscr{L}}{=} N_1+2N_2+ \ldots+ kN_k= T_N^{(k)}$ {(say)} follows $\text{Poi} (\lmd_{1},\ldots, \lmd_{k})$, the Poisson distribution of order $k$ and has
	the PGF  
\begin{align} \label{36a}
\exp\Big({\sum_{j=1}^{k}-\lambda_{j}(1-z^{j})}\Big).
\end{align}

\nin  (iii) Note also that, in view of \eqref{neqn33},  $T_N^{(k)}= \sum_{j=1}^{M_k}Y_j$, where   $M_k \sim \text{Poi}(\dt_k)$, where $\dt_k= \sum_{j=1}^{k} \lmd_j$. Let $\phi_{Y_1}(t)$ be the characteristic function of $Y_1.$
       Then the characteristic function of $T_N^{(k)} $  satisfies, for $ t \in \mathbb{R}$,
\begin{align} \label{eqn337a}
     \mathbb E( e^{i t T_N^{(k)}}) = e^{ \dt_k (\phi_{Y_1}(t)-1 )} \to e^{ \dt (\phi_{Y_1}(t)-1 )} = \mathbb E( e^{i t T_N}), ~~ \text{as} ~ k \to \infty.
\end{align}
This shows that the sequence of CP rvs $T_N^{(k)} \stackrel{\mathcal L}{\longrightarrow}  T_N $, as $ k \to \infty.$ 
\end{remarks}

\noi Our focus will be on the CPD $T_N= C_{P}(\blmd),$ say. The mean and the variance of  $T_N$ follows
 easily. Since $E(N)= Var(N)$, we have 
\begin{align*}
	\E(T_N)= \mathbb E(Y_1) \mathbb E(N)=\jsy j {\lmd_j}
\end{align*} 
and similarly
\begin{align*}
	\V(T_N)= \mathbb E(Y_1^2) \mathbb E(N)= \jsy j^2 {\lmd_j}.
\end{align*}

\noi Next, we obtain the probability distribution of the CPD. Define  ${T_0 = 0 }~ a. s.$  For  $ 1 \leq m \leq n $, let
\begin{align} \label{3.4}
	\Dt_{m,n}=& \{(y_1, y_2,\ldots y_m): y_j \geq 1;  \sum_{j=1}^{m}y_{j}=n \},
\end{align} 
and $ \sum_{ y_j \in \Dt_{m,n}}   \lambda_{y_1} \cdots \lambda_{y_m }= 0$, if the set $\Dt_{m,n} $ is empty.
\begin{theorem} \em  (i): The PMF of the $C_P(\blmd)$, defined in  \eqref{neqn33}, is 
 \begin{align} \label{neqn38a}
\mathbb{P}\Big\{T_N=n\Big\}=
\begin{cases}
 e^{\dt}, \quad \text{if}~~n =0 \\
	e^{-\dt}\sum_{m=1}^{n}  \Big \{\sum_{ (y_1, y_2,\ldots y_m) \in \Dt_{m,n}}   \lambda_{y_1} \cdots \lambda_{y_m } \Big\} \frac{1 }{m!}, \quad \text{if}~~n \ge 1.
\end{cases}
\end{align}
(ii): An explicit expression in terms of Bell polynomials is
\begin{align} \label{neqn38b}
	\mathbb P(T_N= n )=& \frac{e^{-\dt}} {n!}   {B}_{n}\Big( u_{1}, u_{2}, \ldots,  u_{n}\Big), \quad n \ge 0,
\end{align}
where $ u_j= j! \lmd_j$, for $ 1 \le j  \le n.$
\end{theorem}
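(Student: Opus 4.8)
The plan is to establish part (i) by conditioning the random sum $T_N=\sum_{j=1}^{N}Y_j$ on the value of $N$, and then to obtain part (ii) from part (i) by rewriting the inner sum over ordered tuples in \eqref{neqn38a} as a sum over ``type vectors'', which is precisely the index set governing the Bell polynomials.

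For (i), I would use the independence of $N$ and $\{Y_j\}$ to write
\[
\mathbb{P}\{T_N=n\}=\sum_{m=0}^{\infty}\mathbb{P}\{N=m\}\,\mathbb{P}\{Y_1+\cdots+Y_m=n\}.
\]
Since each $Y_j\ge 1$ by \eqref{neqn32}, the event $\{Y_1+\cdots+Y_m=n\}$ is impossible unless $1\le m\le n$, and $T_N=0$ forces $N=0$; this gives $\mathbb{P}\{T_N=0\}=\mathbb{P}\{N=0\}=e^{-\dt}$ and truncates the sum at $m=n$ when $n\ge 1$. Inserting $\mathbb{P}\{N=m\}=e^{-\dt}\dt^{m}/m!$ together with
\[
\mathbb{P}\{Y_1+\cdots+Y_m=n\}=\sum_{(y_1,\ldots,y_m)\in\Dt_{m,n}}\prod_{j=1}^{m}\frac{\lambda_{y_j}}{\dt}=\frac{1}{\dt^{m}}\sum_{(y_1,\ldots,y_m)\in\Dt_{m,n}}\lambda_{y_1}\cdots\lambda_{y_m},
\]
the powers $\dt^{m}$ cancel and what remains is exactly \eqref{neqn38a}. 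This step is routine bookkeeping.

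For (ii), the crux is the change of summation variable inside \eqref{neqn38a}. Grouping the ordered tuples $(y_1,\ldots,y_m)\in\Dt_{m,n}$ by their type vector $(c_1,\ldots,c_n)$ with $c_i=\#\{j:y_j=i\}$, one has $c_1+\cdots+c_n=m$ and $c_1+2c_2+\cdots+nc_n=n$, i.e. $(c_1,\ldots,c_n)\in\Lambda_{m,n}$; the number of tuples of a given type is $m!/(c_1!\cdots c_n!)$ and $\lambda_{y_1}\cdots\lambda_{y_m}=\prod_{i=1}^{n}\lambda_i^{c_i}$. Hence, with $u_i=i!\lambda_i$ (so that $u_i/i!=\lambda_i$),
\[
\frac{1}{m!}\sum_{(y_1,\ldots,y_m)\in\Dt_{m,n}}\lambda_{y_1}\cdots\lambda_{y_m}=\sum_{\Lambda_{m,n}}\frac{1}{c_1!\cdots c_n!}\prod_{i=1}^{n}\lambda_i^{c_i}=\frac{1}{n!}B_{n,m}(u_1,\ldots,u_n),
\]
where the last equality is just the definition of the partial exponential Bell polynomial. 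Summing over $1\le m\le n$ and using $B_n=\sum_{m=1}^{n}B_{n,m}$ from \eqref{222} gives \eqref{neqn38b}.

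I expect the only delicate point to be this type-vector reparametrization: one must verify that summing over ordered tuples with weight $1/m!$ reproduces exactly the multinomial factor $n!/(c_1!\cdots c_n!)$ and the $(i!)^{-c_i}$ factors built into the Bell polynomial, i.e. that the count $m!/\prod c_i!$ absorbs the $1/m!$ while the rescaling $u_i=i!\lambda_i$ matches the remaining coefficients. As a consistency check, and an alternative derivation of (ii), one can instead invoke the probabilistic identity \eqref{eqn226}: by \eqref{neqn33}, $\mathbb{P}\{T_N=n\}=\mathbb{P}\{\sum_{j\ge1}jN_j=n\}$, and since $jN_j\le n$ forces $N_j=0$ for $j>n$, this equals $e^{-\sum_{j>n}\lambda_j}\,\mathbb{P}\{W_n=n\}$ with $W_n=\sum_{j=1}^{n}jN_j$; substituting $\mathbb{P}\{W_n=n\}=e^{-\sum_{j=1}^{n}\lambda_j}B_n(u_1,\ldots,u_n)/n!$ from \eqref{eqn226} (using $\mathbb{P}\{S_n=0\}=e^{-\sum_{j=1}^{n}\lambda_j}$) reconstitutes the factor $e^{-\dt}$ and recovers \eqref{neqn38b}.
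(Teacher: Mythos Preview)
Your argument for part (i) is essentially identical to the paper's: both condition on $N=m$, use that $Y_j\ge 1$ to truncate the sum at $m=n$, expand over $\Dt_{m,n}$, and cancel the $\dt^m$.

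For part (ii) you take a different route. The paper does \emph{not} start from \eqref{neqn38a}; instead it expands the PGF
\[
H_{T_N}(z)=e^{-\dt}\sum_{k\ge 0}\frac{1}{k!}\Big(\sum_{j\ge 1}\lambda_j z^j\Big)^{k},
\]
invokes the series identity \eqref{221} to recognize ordinary partial Bell polynomials $\hat B_{n,k}$, swaps the order of summation, and then uses the conversion \eqref{eqn312} between ordinary and exponential Bell polynomials to read off the coefficient of $z^n$. Your approach instead works directly from part (i): regrouping the ordered compositions in $\Dt_{m,n}$ by their type vector $(c_1,\ldots,c_n)\in\Lambda_{m,n}$ and checking the multinomial count $m!/\prod c_i!$ against the definition of $B_{n,m}$. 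This is correct and arguably more self-contained, since it avoids the detour through $\hat B_{n,k}$ and the conversion formula; the PGF route, on the other hand, makes the appearance of Bell polynomials inevitable from the exponential-of-a-series structure and requires no separate case $n=0$. Your consistency check via \eqref{eqn226} is also valid and in fact reappears in the paper as a remark (see \eqref{eqn36}) rather than as part of the proof.
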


\begin{proof} (i):
First, clearly, $ \mathbb{P}\Big\{T_N=0\Big\}= \mathbb{P} (N=0)= e^{-\dt}$.
 For $n\ge 1$, we have 
\begin{align*} 
	\mathbb{P}\Big\{T_N=n\Big\}
	&= \sum_{m=1}^{n} \mathbb{P}\Big\{Y_{1}+Y_{2}+\cdots+Y_{m}=n\mid N=m\Big\} \mathbb{P}\Big\{N=m\Big\} \quad ( \because y_j \geq 1) \\
	&=\sum_{m=1}^{n} \mathbb{P}\Big\{Y_{1}+Y_{2}+\cdots+Y_{m}=n\Big\} \mathbb{P}\Big\{N=m\Big\}  \\
	&=\sum_{m=1}^{n} \Big \{\sum_{y_j \in \Dt_{m,n}}\mathbb{P}\Big\{Y_{1}=y_{1}, Y_{2}=y_{2}, \ldots, Y_{m}=y_{m}\Big\} \Big \}\mathbb{P}\Big\{N= m\Big\} \\
	&=\sum_{m=1}^{n}\sum_{y_j\in \Dt_{m,n}} \prod_{i=1}^{m} \mathbb{P}\Big\{Y_{i}=y_{i}\Big\}\mathbb{P}\Big\{N=m\Big\} \\
	&= \sum_{m=1}^{n} \Big (\sum_{y_j \in \Dt_{m,n}} \frac{\lambda_{y_1} \cdots \lambda_{y_m }} {\dt^ m} \Big )\frac{e^{-\dt}\dt ^m}{m!}\\
	&=e^{-\dt}\sum_{m=1}^{n}  \Big \{\sum_{ y_j \in \Dt_{m,n}}   \lambda_{y_1} \cdots \lambda_{y_m } \Big\} \frac{1 }{m!},\numberthis \label{pmfgpd}
\end{align*}
where  $\dt =\sum_{j=1}^{\infty}\lambda_j<\infty.$

(ii): \noi The probability generating function  of the $C_P(\blmd)$ (see \eqref{eqn33a} ) is
\begin{align*}
	H_{T_N}(z) =& \,e^{-\dt(1-H_{Y_1}(z))}\\
	= & \,e^{-\dt}\sum_{k=0}^{\infty} \dfrac{1}{k!} \Big( \sum_{j=1}^{\infty}\lambda_jz^j\Big)^k \\
	= & \, e^{-\dt}\sum_{k=0}^{\infty} \dfrac{1}{k!} \Big \{ \sum_{n=k}^{\infty} \hat{B}_{n, k}\Big(\lambda_{1}, \lambda_{2}, \ldots, \lambda_{n-k+1}\Big) z^{n} \Big \} \quad(\text{using}~ \eqref{221} )\\
	= & \, e^{-\dt}\sum_{n=0}^{\infty}  \Big \{ \sum_{k=0}^{n} \dfrac{1}{k!}\hat{B}_{n, k}\Big(\lambda_{1}, \lambda_{2}, \ldots, \lambda_{n-k+1}\Big) \Big \}  z^{n}\\
	= & \, e^{-\dt}\sum_{n=0}^{\infty}  \Big \{\dfrac{1}{n!} \sum_{k=0}^{n}  {B}_{n, k}\Big( 1!\lambda_{1}, 2!\lambda_{2}, \ldots, (n-k+1)!\lambda_{n-k+1}\Big) \Big \}  z^{n} ~~( \text {using \eqref{eqn312}})\\
	= & \, e^{-\dt}\sum_{n=0}^{\infty}  \Big \{\dfrac{1}{n!}   {B}_{n} \Big( 1!\lambda_{1}, 2!\lambda_{2}, \ldots, n!\lambda_{n} \Big) \Big \}  z^{n}
\end{align*}
which shows that, for $ n \geq 0,$
\begin{align*} 
	\mathbb P(T_N= n )= & \, \frac{e^{-\dt}} {n!}   {B}_{n}\Big( 1!\lambda_{1}, 2!\lambda_{2}, \ldots, n!\lambda_{n}\Big) \\
	= & \,  \frac{e^{-\dt}} {n!}   {B}_{n}\Big( u_{1}, u_{2}, \ldots,  u_{n}\Big), 
\end{align*}
which proves the result.

\end{proof}

\corollary \em   When  $\lambda_j=0,$ for $ j \geq 2$, we get $\dt= \lmd_1$ and $Y_j$'s are degenerate at 1. Hence, $\Dt_{m,n} $ is empty for $ 1 \leq m \leq n-1 $. If $m=n$, $\Dt_{n,n}=
(1, \ldots, 1) $,  then $ \sum_{ y_j \in \Dt_{n,n}}   \lambda_{y_1} \cdots \lambda_{y_1 }=  \lmd_1 \cdots \lmd_1= \lmd_1^n.$
Thus, \eqref{neqn38a} reduces to
$ \text{Poi}(\lmd_1),$  the distribution of $N_1$, as expected.\\

\noi (ii) Take now  $\lambda_j =0$ for $ j \geq k+1$, so that $ \dt= \dt_k$ in \eqref{neqn38a}, we get the probability distribution  of the Poisson distribution of order $k$ as
\begin{equation} \label{eqn34a}
	\mathbb P\{W_k=n\}=e^{-\dt_k} \sum_{m=1}^{n}  \Big \{\sum_{ y_j \in \Dt_{m,n}}   \lambda_{y_1} \cdots \lambda_{y_m } \Big\} \frac{1 }{m!},
\end{equation}
where $\Dt_{m,n}=\{(y_1,y_2,\ldots y_m): y_i\ge 1; \sum_{j=1}^{m}y_{j}=n\}$ and $\dt_k=\sum_{j=1}^{k}\lambda_j$, for all $\lambda_j>0$.

\vone

\remark  \em
\noi It is well known (see Comtet \cite{CT74}) that the complete Bell polynomials appear in the exponential of a formal power series:
\begin{align*} 
	\exp \Big (\sum _ {n = 1} ^  \infty \frac{a _ {n} x  ^ {n} }{n! } \Big)= 
	\sum _ {n = 0} ^  \infty   
	\frac{B_{n}(a_1, a_2, \ldots, a_{n}) }{n! } x^n. 
\end{align*}
\noi The above equation,  when $x=1$, leads to
\begin{align*} 
	\exp \Big (\sum _ {n = 1} ^  \infty \frac{a _ {n}  }{n! } \Big)= 
	\sum _ {n = 0} ^  \infty   
	\frac{B_{n}(a_1, a_2, \ldots, a_{n}) }{n! }. 
\end{align*}

\vspace{-.2cm}
If we take  $a_n= n! \lmd_n$, we get
\begin{align*} 
	\exp \Big (\sum _ {n = 1} ^ {\infty} \lmd_n   \Big)= \exp(\dt)=
	\sum _ {n = 0} ^  \infty   
	\frac{B_{n}(1!\lmd_1,  2!\lmd_2, \ldots, n! \lmd_n) }{n! }, 
\end{align*}
proving that \eqref{neqn38b} is indeed a valid probability distribution.

\remark  \em 
From \eqref{eqn226}  and \eqref{neqn38b},  
\begin{align} \label{eqn36}
    \mathbb P\{T_N= n \}&=  \frac{e^{-\dt}} {n!}   {B}_{n}\Big( 1!\lambda_{1}, 2!\lambda_{2}, \ldots, n!\lambda_{n}\Big) \nonumber \\ 
	&= {e^{-\dt}} \frac{\mathbb  P\{W_n=n\}}{ \mathbb  P(S_n=0)}  \nonumber \\
	 &= {e^{-(\dt-\dt_n)}} { \mathbb P\{W_n=n\}}, ~ n \in  \mathbb{Z}.
\end{align}

\noi Let now $\lmd_j =0$ for $ j \geq 2.$  Then $\dt= \dt_n=\lmd_1$ and $W_n=
X_1.$ Hence,
\begin{align*} 
\mathbb P\{T_N= n\}=  \mathbb P\{X_1= n \},
\end{align*}
as expected.

\noi Suppose now $\lmd_j =0$ for $ j \geq (k+1).$  Then $\dt= \dt_k$  and
 $W_n= W_k$  for $ n \geq (k+1).$ Hence,
\begin{align*} 
	\mathbb  P\{T_N= n\}= \begin{cases}
	{e^{-(\dt_k-\dt_n)}}  \mathbb  P\{W_n= n \}, & \text{if} ~ n \leq k \\
         \mathbb  P\{W_k= n\}, & \text{if} ~ n \geq k+1.
\end{cases}
\end{align*}
\noi For distribution of $W_k$, see  \eqref{eqn214} or  \eqref{eqn34a}.
\section{ A Compound Poisson Processes. }
Motivated by the properties of the $GPD$, we now extend it to  processes versions.

\begin{definition} \em (i):   {\bf A Generalized Poisson Process.} Let $\{N(t, \lmd_j) \}, j \geq 1,$ be a sequence of independent Poisson processes with
	parameters $\lmd_j$  satisfying $\dt= \jsy \lmd_j < \infty.$ We call the process
\begin{align}\label{4.1}
  G_P(t, \blmd) =\sum_{j=1}^{\infty} j N(t, {\lmd}_j), 
\end{align}
  the generalized Poisson process (GPP).\\ 
\noi (ii): {\bf Poisson Process of order $k$.} For $k \in \N$, let $\blmd^{(k)}= ( \lmd_1, \ldots, \lmd_k)$ and 
\begin{align} \label{4.2}
	G_P(t, \blmd^{(k)})=\sum_{j=1}^{k} j N(t, {\lmd}_j).
\end{align}
We call the generalized Poisson
process $\{ 	G_P(t, \blmd^{(k)}) \}_{t \ge 0}$ the Poisson process of order $k.$ 
This is the natural extension of Poisson distribution of order $k$ to Poisson process of order $k.$ For other extensions, see, for example, Sengar and Upadhye
\cite{AN23} and the references therein.
\end{definition}

\noi Note that the process $ G_P(t, \blmd)$ reduces to a $G_P(t, \blmd^{(k)})$, when $ \lmd_j= 0$ for $j \ge k+1.$
We next show that the GPP is indeed a compound Poisson process (CPP).
\begin{lemma} \em Let $Y_j$'s be i.i.d with distribution given in \eqref{neqn31} and 
${N(t, \dt)}$ be a Poisson process with the parameter $\dt=\jsy \lmd_j < \infty $ 
and is independent of the $Y_j$.
Then the GPP 
\begin{align}\label{43a}
	G_P(t, \blmd) \st \sum_{j=1}^{N(t, \dt)}Y_j, \quad t>0,
\end{align}
and hence is a compound Poisson process.
\end{lemma}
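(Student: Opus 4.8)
The plan is to repeat the probability-generating-function (PGF) computation behind the distributional identity \eqref{neqn33}, now carrying the time parameter $t$, and then to upgrade the resulting fixed-time marginal identity to an identity of laws of processes by exploiting that both sides have stationary independent increments.

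First I would check that the left-hand side defines a genuine (a.s. finite) process. Since the $N(t,\lmd_j)$ are independent with $\P\{N(t,\lmd_j)\ge 1\}=1-e^{-\lmd_j t}\le \lmd_j t$, we have $\jsy\P\{N(t,\lmd_j)\ge 1\}\le t\dt<\infty$, so by the Borel--Cantelli lemma only finitely many of the $N(t,\lmd_j)$ are nonzero almost surely; applying the same bound at a terminal time $T$ shows that on $[0,T]$ only finitely many components $N(\cdot,\lmd_j)$ are ever nonzero a.s. Hence $G_P(\cdot,\blmd)=\jsy jN(\cdot,\lmd_j)$ is a.s. a finite superposition of independent Poisson processes on compact time sets and is well defined; this is the process analogue of the convergence remark following \eqref{neqn32}, and Kolmogorov's three series theorem yields the same conclusion.

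Next I would match the one-dimensional distributions. By independence of the $N(\cdot,\lmd_j)$ and the Poisson PGF,
\[
\E\big(z^{G_P(t,\blmd)}\big)=\jsy\E\big(z^{jN(t,\lmd_j)}\big)=\jsy e^{-\lmd_j t(1-z^j)}=\exp\Big(t\jsy\lmd_j(z^j-1)\Big),
\]
which is exactly \eqref{neqn34} with $t$ inserted. For the right-hand side, conditioning on $N(t,\dt)\sim\text{Poi}(\dt t)$ and using the composition rule for PGFs of random sums gives
\[
\E\Big(z^{\sum_{j=1}^{N(t,\dt)}Y_j}\Big)=e^{-\dt t(1-H_{Y_1}(z))}=\exp\Big(t\jsy\lmd_j(z^j-1)\Big),
\]
with $H_{Y_1}(z)=\jsy z^j\lmd_j/\dt$ exactly as in \eqref{eqn33a}--\eqref{neqn33a}. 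The two PGFs coincide, so $G_P(t,\blmd)$ and $\sum_{j=1}^{N(t,\dt)}Y_j$ have the same law for every fixed $t>0$.

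Finally I would promote this to equality in law of the processes. The right-hand side is a compound Poisson process and hence a L\'evy process with stationary independent increments. For the left-hand side, each $N(\cdot,\lmd_j)$ has stationary independent increments and the processes are mutually independent, so the superposition $G_P(\cdot,\blmd)$ inherits these properties, and its increment over $(s,t]$ is distributed as $G_P(t-s,\blmd)$ by the same PGF computation with $t$ replaced by $t-s$. Because the finite-dimensional distributions of a process with stationary independent increments are determined by its one-dimensional marginals, the equality of marginals for all $t$ forces equality of all finite-dimensional distributions, which is \eqref{43a}, and in particular $G_P(\cdot,\blmd)$ is a compound Poisson process. The main obstacle is precisely this last step: the PGF argument only delivers fixed-time marginal agreement, so the genuinely new content beyond Theorem 3.1 is establishing that the infinite superposition is a.s. finite and has stationary independent increments, which is what lets marginal agreement upgrade to process-level agreement.
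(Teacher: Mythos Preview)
Your core argument---computing the PGF of each side for fixed $t$ and matching them---is exactly what the paper does: it writes out \eqref{eqn44a} and \eqref{eqn45a} and concludes immediately, reading $\st$ as equality of one-dimensional laws. You go beyond the paper in two respects: you first verify via Borel--Cantelli that the infinite superposition is a.s.\ a finite sum on compact time intervals, and you then promote the marginal identity to equality in law of processes by checking that both sides have stationary independent increments. Both additions are correct and sharpen the conclusion; the paper is content with the fixed-$t$ statement and addresses neither point. One notational slip to fix: in your displayed PGF computation for the left-hand side you used $\jsy$ (which in this paper is the macro for $\sum_{j=1}^{\infty}$) where a product $\prod_{j\ge1}$ is required; the final exponential expression is unaffected.
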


\begin{proof}
\noi For fixed $t>0$, the PGF of   $ G(t, \blmd)$ is 
\begin{align} \label{eqn44a}
	H_{G(t, \blmd)}(z)&= \mathbb{E}\Big(z^{\sum_{j=1}^{N(t, \dt)}Y_j}\Big) \non \\
           &=  \exp \Big( { - t \dt (1-F_{Y_1}(z)) } \Big) \non \\
           &= \exp \Big({ -t \dt \big(1- \sum_{j=1}^{\infty}z^{j}\frac{\lmd_{j}}{\dt}\big)} \Big)  \non \\
           &=  \exp \Big({ -t \dt + t \sum_{j=1}^{\infty}{\lmd_{j}}} z^{j}\Big).
\end{align}
Similarly, it follows from \eqref{neqn34} that
\begin{align} \label{eqn45a}
	\mathbb{E}\Big( z^{\sum_{j=1}^{\infty} j N(t, {\lmd}_j)}\Big) 
	=&  \exp \Big( \sum_{j=1}^{\infty} t \lmd_{j} (z^{j}-1) \Big) \nonumber \\
	=&  \exp \Big({ -t \dt + t \sum_{j=1}^{\infty}{\lmd_{j}}}z^{j} \Big). 	
\end{align}
The lemma follows from \eqref{eqn44a} and \eqref{eqn45a}.
\end{proof}

\noi Denote henceforth the compound Poisson process (CPP) $\sum_{j=1}^{N(t, \dt)}Y_j $ by $H(t, \blmd).$
\noi Taking $\lmd_j=0$ for $k\ge 1$,   the CPP becomes the Poisson process of order $k$, defined by
\begin{align} \label{4.2b}
	H(t, \blmd^{(k)}) \st \sum_{j=1}^{N(t, \dt_k)}Y_j,
\end{align}

\vspace{-.2cm}
where  $\blmd^{(k)}= ( \lmd_1, \ldots, \lmd_k)$ and $ \dt_k= \sum_{j=1}^{k} \lmd_j.$ 
\vspace*{.2cm}

\begin{remarks} \em  (i)
	\nin Since the Poisson process has independent increments, the CPP  $H(t, \blmd))$    has independent increments and also $ \mathscr{L}( H(s+t, \blmd)-H(s, \blmd))=  \mathscr{L}(H(t, \blmd))$. Let now $\phi(s)= E(e^{isY_1})$ be the characteristic function of $Y_1$. 
		Then, in view of \eqref{43a}, the characteristic function of $H(t, \blmd)$  is
	\begin{align*} 
		\mathbb  E(e^{i s H(t, \blmd)})=& \exp(\dt t (\phi(s)-1 )) \\
		=& \exp \Big(\dt t \int  (e^{isy}-1) d\mu(y)\Big), 
	\end{align*}
	which shows that  $H(t, \blmd)$ is infinitely divisible with the Levy measure $\nu= \dt t \mu$, where  $\mu$ is the distribution of $Y_1.$  Thus,  we have the the process $H(t, \blmd)$ is an integer-valued Levy process and such process have applications in financial
	econometrics (see Ole Bandorff-Nielsen et al. \cite {OBN12}).
   \nin Moreover, let  $A= \dt \int y d\mu$ and  $B= \dt \int y^2 d\mu$. Then it is
   well known that   $M_1(t)= H(t, \blmd)- A t$ and  $M_2(t)= (H(t, \blmd)- A t)^2 -B t$ are martingales with respect to the natural filtration. \\

\nin (ii) Let $g(s)= \mathbb  E(s^{Y_1})$ be the moment generating function of $Y_1$ and 
$ h(s)= 1-g(-s)$. Since $Y_j$'s are positive integer-valued rvs, we have by Theorem 3.5 of
\cite{OBN12} that  $ H(t, \blmd)= N(S(t), \dt)$ for some   subordinator $S(t) $  if and only if $ h(s)= \log \mathbb  E(e^{-sU})$ for some infinitely divisible rv $U >0.$
\end{remarks}

\noi The one-dimensional distributions of the process $H(t, \blmd)$ can be obtained from the CPD, by replacing  $\lmd_j$ by $t\lmd_j.$ 
This leads us to the following result.
\begin{theorem} \em
The one-dimensional distributions of the
	CPP are
	\begin{equation}\label{n43}
		\mathbb{P}\Big\{H(t, \blmd)=n \Big\}= \begin{cases}
		e^{-t\dt},  ~  \text{if} ~~ n=0, \\
		e^{-t\dt}\sum_{m=1}^{n}  \sum_{y_j \in \Dt_{m,n}} \prod_{i=1}^{m} \lambda_{y_i}  \frac{t^m }{m!}, ~~ \text{if}~~ n \geq 1, 
	      \end{cases}	
\end{equation}
where $\Dt_{m,n}$ is defined in \eqref{3.4}.\\
Alternatively, in terms of Bell polynomials,
\begin{align} \label{4.10}
	\mathbb{P}\Big\{H(t, \blmd)=n \Big\}= & \frac{e^{-t\dt}} {n!}   {B}_{n}\Big( u_1 t, u_2 t, \ldots, u_n t\Big), ~~ n \geq 0,
\end{align}
where $ u_j= j! \lmd_j, 1 \le j \le n.$
\end{theorem}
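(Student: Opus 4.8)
The plan is to reduce the statement to the probability mass function of the $C_P(\blmd)$ already obtained in the preceding theorem, via the structural identity of the lemma above. First I would recall from \eqref{43a} that, for each fixed $t>0$, $H(t,\blmd)\st\sum_{j=1}^{N(t,\dt)}Y_j$, where $N(t,\dt)\sim\text{Poi}(t\dt)$ and the $Y_j$'s are i.i.d.\ with $\mathbb P(Y_1=j)=\lmd_j/\dt$ as in \eqref{neqn32}. The observation driving the whole proof is that this is nothing but a compound Poisson distribution $C_P(\blmd')$ associated with the rescaled parameter sequence $\blmd'=\{t\lmd_j\}_{j\ge1}$: the total rate becomes $\dt'=\sum_{j}t\lmd_j=t\dt$, while the jump law is left unchanged since $t\lmd_j/(t\dt)=\lmd_j/\dt$. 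Hence the one-dimensional law of $H(t,\blmd)$ is obtained from that of $T_N=C_P(\blmd)$ by the substitution $\lmd_j\mapsto t\lmd_j$, exactly as anticipated in the remark preceding the theorem.

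With this identification in hand, both displayed formulas follow by carrying out the substitution $\lmd_j\mapsto t\lmd_j$ (equivalently $\dt\mapsto t\dt$ and $u_j=j!\lmd_j\mapsto j!\,t\lmd_j=u_jt$) in the two expressions \eqref{neqn38a} and \eqref{neqn38b}. For the explicit form, each inner summand $\lmd_{y_1}\cdots\lmd_{y_m}$ ranging over $\Dt_{m,n}$ carries exactly $m$ factors, so the substitution turns it into $t^m\prod_{i=1}^m\lambda_{y_i}$; factoring $t^m$ out of the inner sum produces the weight $t^m/m!$ of \eqref{n43}, and the prefactor $e^{-\dt}$ becomes $e^{-t\dt}$. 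For the Bell-polynomial form, the same substitution sends $B_n(u_1,\dots,u_n)$ to $B_n(u_1t,\dots,u_nt)$, which is precisely \eqref{4.10}. The case $n=0$ is immediate, since $H(t,\blmd)=0$ forces $N(t,\dt)=0$, an event of probability $e^{-t\dt}$.

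To see that this substitution is legitimate and not merely formal, I would verify it at the level of probability generating functions. By \eqref{eqn44a} the PGF of $H(t,\blmd)$ equals $\exp\big(\sum_{j}t\lmd_j(z^j-1)\big)$, which is exactly the $C_P(\blmd)$ generating function \eqref{neqn33a} with $\lmd_j$ replaced by $t\lmd_j$. Since the PGF determines the law of a nonnegative integer-valued random variable, the coefficient extraction performed in the proof of the PMF theorem---expanding the exponential, applying the power identity \eqref{221}, and passing from ordinary to exponential Bell polynomials through \eqref{eqn312}---transfers verbatim with $\lmd_j$ replaced by $t\lmd_j$, yielding \eqref{4.10}, while \eqref{n43} follows from the same conditioning computation as in part (i) of that theorem. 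I do not expect a genuine obstacle here: the entire argument is a scaling/bookkeeping reduction, and the only point demanding care is tracking that the product over $\Dt_{m,n}$ contains precisely $m$ copies of $t$, so that the correct power $t^m$ appears in \eqref{n43}. Should one prefer a self-contained derivation, formula \eqref{n43} can equally be obtained from scratch by conditioning on $N(t,\dt)=m$ and summing over $\Dt_{m,n}$, mirroring the proof of the $C_P(\blmd)$ distribution; but reusing the earlier theorem is the cleaner route.
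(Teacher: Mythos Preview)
Your proposal is correct and follows exactly the paper's own approach: the paper states, just before the theorem, that ``the one-dimensional distributions of the process $H(t, \blmd)$ can be obtained from the CPD, by replacing $\lmd_j$ by $t\lmd_j$,'' and leaves it at that. You have simply fleshed out this one-line remark with the appropriate justifications (the PGF check and the bookkeeping on powers of $t$), so there is nothing to add.
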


\corollary \em Taking $\lambda_j =0$ for $ j \geq k+1$ in \eqref{n43}, we get the probability mass function (PMF) of the Poisson process of order $k$ as
\begin{equation}\label{3.10}
	\mathbb{P}\Big\{H(t, \blmd^{(k)})=n \Big\} =  \begin{cases}
	e^{-t\dt_k},  ~~ \text{if}~~ n=0, \\
	e^{-\delta_k }\sum_{m=1}^{n}\sum_{(y_1, y_2,\ldots y_m) \in\Dt_{m,n}}  \prod_{i=1}^{m} \lambda_{y_i} \frac{ t^m}{m!}, ~~ \text{if}~~ n \geq 1,
  \end{cases}
\end{equation}


\subsection{Mean, Variance and Covariance Functions}
The mean, variance and covariance functions of  $G(t, \blmd)$ are as follows. For positive reals $s$ and $t$,
\begin{align}
	\mathbb{E}\Big(H(t, \blmd)\Big)&=\mathbb{E}(Y_1)\mathbb{E}\Big(N(t, \dt)\Big) = t \sum_{j=1}^{\infty}j\lambda_{j}; \label{n410} \\
	\V(H(t, \blmd))&= \mathbb{E}(Y_1^2)\mathbb{E}\Big(N(t, \dt)\Big)=t \sum_{j=1}^{\infty}j^2\lambda_{j}, \label{n411}
\end{align}
since $ \mathbb{E}\Big(N(t, \dt)\Big)= \V(N(t, \dt)). $ 

\nin Using  $\operatorname{Cov}(N(s, \dt), N(t, \dt))= \min(s, t) \dt$,  we have for $ 0< s \le t$,  
\begin{align}
	\operatorname{Cov}(H(s, \blmd), H(t, \blmd))&= \V (Y_1) \mathbb{E}\Big(N(s, \dt)\Big) + \mathbb{E}^2(Y_1) \mathbb{E} \Big(\CV(N(s, \dt), N(t, \dt) )  \Big) \non \\
	  &= \V (Y_1) s \dt + \mathbb{E}^2(Y_1)  s \dt =s \dt \mathbb{E}(Y_1^2) \label{n412}  \\ 
	 &= s \sum_{j=1}^{\infty}j^2\lambda_{j} \non
\end{align}

 \nin The CPP exhibits an over-dispersion property. That is, for $t>0$,  
\begin{align*}
	\operatorname{Var}(H(t, \blmd))-\mathbb{E}\Big( H(t, \blmd)\Big)= &t \sum_{j=1}^{\infty}j^2\lambda_{j}- t \sum_{j=1}^{\infty}j\lambda_{j} \\ 
	=& t \jsy j(j-1) \lmd_j>0.
\end{align*}

\nin The following definition of long-range property (LRD) and short-range property (SRD) property will be used (see, for e.g., Maheshwari and Vellaisamy \cite{MV16}):

\begin{definition} \em
	Let $s>0$ be fixed and $\{X(t)\}_{t\ge0}$ be a stochastic process such that the correlation function
	\begin{equation*}
		\operatorname{Corr}(X(s),X(t))\sim c(s)t^{-\gamma},\ \ \text{as}\ t\to\infty,
	\end{equation*}
	for some $c(s)>0$. Then, the process $\{X(t)\}_{t\ge0}$ is said to have the LRD property if $\gamma\in(0,1)$ and the SRD property if $\gamma\in (1,2)$.	
\end{definition}

\begin{lemma} \em The CPP $H(t, \blmd)$ exhibits the LRD property.
\end{lemma}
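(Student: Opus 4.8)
The plan is to obtain the result by a direct computation, since all the moment information required is already assembled in the display equations \eqref{n411} and \eqref{n412} immediately preceding the lemma. First I would fix $s>0$ and take $t\ge s$, and recall from those two displays that
\begin{align*}
\V(H(t,\blmd)) = t\jsy j^2\lambda_j, \qquad \CV(H(s,\blmd),H(t,\blmd)) = s\jsy j^2\lambda_j.
\end{align*}
Writing $A := \jsy j^2\lambda_j = \dt\,\E(Y_1^2)$ for the common constant, these become $\V(H(s,\blmd))=sA$, $\V(H(t,\blmd))=tA$, and $\CV(H(s,\blmd),H(t,\blmd))=sA$. Under the standing hypotheses on $\blmd$ (namely $\jsy t^j\lambda_j<\infty$ for $0<|t|<M$), every polynomial moment $\jsy j^r\lambda_j$ converges, so $A$ is finite, and it is strictly positive as soon as some $\lambda_j>0$; hence the correlation below is well defined.

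Next I would form the correlation function directly from these three quantities. Because the covariance and both variances share the single factor $A$, the computation collapses without any approximation:
\begin{align*}
\CR(H(s,\blmd),H(t,\blmd)) = \frac{sA}{\sqrt{sA}\,\sqrt{tA}} = \sqrt{\frac{s}{t}} = \sqrt{s}\,\cdot t^{-1/2}.
\end{align*}

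Finally I would compare this with the definition of the LRD property. Taking $c(s)=\sqrt{s}>0$ and decay exponent $\gamma = 1/2$, the required asymptotic $\CR(H(s,\blmd),H(t,\blmd))\sim c(s)\,t^{-\gamma}$ as $t\to\infty$ holds, and indeed holds as an exact equality for every $t\ge s$. Since $\gamma = 1/2 \in (0,1)$, the process $H(t,\blmd)$ has the long-range dependence property, as claimed. There is no real obstacle in this argument beyond the bookkeeping just noted; the only point worth stating explicitly is the finiteness and positivity of $A$, after which the correlation simplifies exactly and the conclusion is immediate.
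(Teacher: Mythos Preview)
Your proof is correct and follows essentially the same route as the paper: compute the correlation from \eqref{n411}--\eqref{n412}, obtain $\CR(H(s,\blmd),H(t,\blmd))=\sqrt{s}\,t^{-1/2}$, and note that the exponent $\gamma=1/2\in(0,1)$. Your remarks on the finiteness and positivity of $A=\jsy j^2\lambda_j$ are a small addition over the paper's version, but the argument is otherwise identical.
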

	\begin{proof} From the equations \eqref{n410}-\eqref{n412}, the correlation function of  the CPP is, for $0<s< t,$
	\begin{align*}
	\operatorname{Corr}\Big(H(s, \blmd), H(t, \blmd)\Big) 
	 =& \frac{s\dt \mathbb{E}(Y_1^2)} {\sqrt{s\dt \mathbb{E}(Y_1^2)} \sqrt{t \dt \mathbb{E}(Y_1^2)} } \\
	=& \sqrt{s}t^{-1/2},
\end{align*}
and so it has the LRD property.
\end{proof}	

\begin{lemma}\label{lem41} \em
	The following asymptotic result holds for the CPP:
	\begin{equation}\label{limit}
		\lim_{t\to\infty}\frac{H(t,\blmd)}{t}= \dt  \mathbb{E}(Y_1)~~ a.s.
	\end{equation}	
\end{lemma}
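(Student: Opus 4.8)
The plan is to read \eqref{limit} as the strong law of large numbers for the compound Poisson process and to establish it by splitting the normalized process into a law-of-large-numbers factor for the i.i.d.\ jumps $\{Y_j\}$ and one for the driving Poisson process $N(t,\dt)$. Since $H(t,\blmd)$ is by definition the compound Poisson process $\sum_{j=1}^{N(t,\dt)} Y_j$, the identity is pathwise (not merely in law), and I would first write, on the event $\{N(t,\dt)\ge 1\}$,
\[
\frac{H(t,\blmd)}{t} = \frac{1}{t}\sum_{j=1}^{N(t,\dt)} Y_j = \left(\frac{1}{N(t,\dt)}\sum_{j=1}^{N(t,\dt)} Y_j\right)\cdot \frac{N(t,\dt)}{t}.
\]
Because $N(t,\dt)\to\infty$ almost surely as $t\to\infty$, this factorization is eventually valid on a full-probability set, so it suffices to take almost-sure limits of the two factors.

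Next I would treat the two factors separately. For the second factor, the elementary renewal theorem (equivalently, the SLLN applied to the i.i.d.\ exponential interarrival times of the Poisson process $N(\cdot,\dt)$) gives $N(t,\dt)/t \to \dt$ a.s.\ as $t\to\infty$. For the first factor, I would note that the standing summability hypotheses on $\blmd$ force $\jsy j\lmd_j<\infty$, and hence $\E(Y_1)=\dt^{-1}\jsy j\lmd_j<\infty$; this finiteness is exactly what lets me invoke Kolmogorov's strong law for the i.i.d.\ sequence $\{Y_j\}$. Writing $R_n=\sum_{j=1}^n Y_j$, I then have $R_n/n \to \E(Y_1)$ a.s.

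The one step that requires care---and the main, though entirely standard, obstacle---is composing the SLLN for $\{Y_j\}$ with the random index $N(t,\dt)$, since the number of summands is itself a diverging random time. Here I would use that almost-sure convergence of a sequence is preserved under any random time change that tends to infinity: on the intersection of the two full-probability events $\{R_n/n \to \E(Y_1)\}$ and $\{N(t,\dt)\to\infty\}$ (which again has probability one), substituting $n=N(t,\dt)$ yields $R_{N(t,\dt)}/N(t,\dt) \to \E(Y_1)$ a.s. Multiplying the two almost-sure limits then gives
\[
\frac{H(t,\blmd)}{t} \longrightarrow \E(Y_1)\,\dt \quad \text{a.s.},
\]
which is precisely \eqref{limit}. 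I would close by recording that finiteness of $\E(Y_1)$, needed for the SLLN, is guaranteed by the hypotheses on $\blmd$, so no additional integrability assumption is required.
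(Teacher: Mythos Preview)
Your proof is correct but takes a genuinely different route from the paper's. The paper argues via the GPP representation $G_P(t,\blmd)=\sum_{j=1}^\infty j\,N(t,\lmd_j)$: it applies the renewal theorem componentwise to get $N(t,\lmd_j)/t\to\lmd_j$ a.s.\ for each $j$, and then simply writes $\lim_{t\to\infty} H(t,\blmd)/t=\sum_{j=1}^\infty j\lmd_j=\dt\,\E(Y_1)$. You instead work directly with the compound-Poisson definition $H(t,\blmd)=\sum_{j\le N(t,\dt)}Y_j$, factorize $H(t,\blmd)/t$ into an SLLN factor for the i.i.d.\ jumps and a renewal-theorem factor for the single driving process $N(t,\dt)$, and handle the random-index composition carefully. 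Your approach has the advantage of sidestepping the interchange of limit and infinite sum that the paper's proof passes over without justification (and which would strictly need a monotone- or dominated-convergence argument, since almost-sure convergence of each summand does not automatically propagate through an infinite series). The paper's approach, by contrast, makes the componentwise structure of the limit immediately visible and avoids the random-time-change step you had to address.
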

\begin{proof} By the renewal theorem of the Poisson process,
	\begin{equation*}
	\lim_{t\to\infty}\frac{N(t, \lmd_j)}{t}=\lambda_{j}, ~ a.s.
	\end{equation*}	
    Hence,
    \begin{align*}
    	\lim_{t\to\infty}\frac{H(t,\blmd)}{t}{=} &\sum_{j=1}^{\infty}j\lim_{t\to\infty}\frac{N(t, \lmd_j)}{t} \\
    	=&\sum_{j=1}^{\infty}j\lambda_{j}~~ a.s \\
        = & \, \dt  \mathbb{E}(Y_1) ~~ a.s.
\end{align*}
\end{proof}

\noi The above result implies also the convergence in law, that is,
\begin{equation} \label{n47}
\frac{H(t,\blmd)}{t}~ {\stackrel{\mathscr{L}}{\to}}~ \dt  \mathbb{E}(Y_1), ~~\text{as}~ t \to \infty.
\end{equation}	

\nin Also,  replacing $\dt$ by $\dt_k$, we obtain the corresponding result for the Poisson process of order $k.$

 \section{A  Time Fractional Compound Poisson Process }
\nin  In the past few decades,  fractional Poisson processes
and their extensions have received  considerable attention of  several researchers,  see, for instance, Laskin \cite{Lask09}, Begin and Vellaisamy \cite{BegVel18} and Kataria and Khandakar \cite{KKMK21}, Gara {\it et al.} \cite{GOS17} and the references therein.
A multivariate extension of the fractional Poisson process
is discussed in Begin and Macci \cite{BegMac16}.

\nin First, we briefly recall some properties of the time fractional Poisson process, which will be used later.
\subsection{Time Fractional Poisson process }\label{secfpp}
Let $0<\beta<1$ and $\lmd \geq 0.$ Unlike the  Poisson process, the time fractional Poisson process (TFPP) has neither independent nor stationary increments. Also, it is not a Markov process. \\
Let $\{U_i\}_{i=1}^{\infty}$ be a sequence of i.i.d. positive rvs, denoting
the inter-arrival times of an event, with the common cumulative distribution function (CDF) 
\begin{equation}
	\label{mlcdf}
	F_U (u) = \mathbb{P}\{U \leq u\} = 1 - M_\beta (- \lmd u^\beta),
\end{equation}
where $M_\beta (-t^\beta)$ is the one-parameter Mittag-Leffler function. That is, $U_i$'s follow
the Mittag-Leffler distribution with density (see \eqref{mld2})
$$f_{U_1}(u)=\lambda u^{\beta-1}M_{\beta,\beta}(-\lambda u^\beta),~u\geq 0.$$
The sequence of the {\em epochs}, denoted by $\{V_n\}_{n=1}^\infty$, is given by the sums of the inter-arrival
times
\begin{equation}
	\label{epoch}
	V_n = \sum_{j=1}^n U_j, ~ n \ge 1.
\end{equation}
The epochs represent the times in which events arrive or occur. The PDF of the $n$-th epoch $V_n,$  the $n$-fold convolution of $U_j$'s,  is given by (see \eqref{mld3})
\begin{equation}
	\label{epochpdf}
	f_{V_n} (v) = f^{*n}_U (v) = \beta \lmd^n \frac{v^{n\beta -1}}{(n-1)!} M^{(n)}_\beta (- \lmd v^\beta).
\end{equation}
The counting process $N_{\beta}(t, \lmd)$ that counts the number of epochs (events) up to time $t$, assuming that
$U_0 = 0$, is called the TFPP. Note $N_{\beta}(t, \lmd)$ is given by
\begin{equation}
	\label{countingprocess}
	N_{\beta}(t, \lmd) = \max \{ n:\; V_n \leq t \}.
\end{equation}

\noindent The {PMF} $p_{_{\beta}}(n|t,\lambda)$ of the TFPP is given by (see Laskin \cite{Lask09} or Meerschaert {\it et al.} \cite{MNV11}) 
\begin{align*}
	p_{_{\beta}}(n|t,\lambda)
	=& 	{(\lambda t^{\beta})^n} M_{\bt, n\bt+1}^{n+1} (-\lmd t^\bt) \non \\  
	 	=& 	\frac{(\lambda t^{\beta})^n}{n!} M_{\bt}^{(n)} (-\lmd t^\bt),~
	 	\numberthis
	\label{n513} 
\end{align*}
using \eqref{2.5}. Also,  $p_{_{\beta}}(n|0,\lambda)=1\text{ if }n=0 \text{ and is zero if }n\geq1.$

\noi Also, the PMF $p_{_{\beta}}(n|t,\lambda)$ satisfies (see Begin and Orsingher \cite{BegOrs09})
 \begin{equation} \label{neqn56}
 	\partial_t^{\beta} p_{_{\beta}}(n|t,\lambda)= \begin{cases}
 		-\lambda p_{_{\beta}}(0|t,\lambda),  \quad \text{if}~~  n=0, \\
 		-\lambda\Big[ p_{_{\beta}}(n|t,\lambda)-p_{_{\beta}}(n-1|t,\lambda)\Big],
 		\quad \text{if}~~ n \ge 1,
 	\end{cases}
 \end{equation}
 where $\partial_t^{\beta}$ denotes the Caputo fractional derivative defined in \eqref{cd}.

\noi The mean and the variance of the TFPP are given by (see  \cite{Lask03})
 \begin{align}
 	\E \Big( N_{\beta}(t,\lambda)\Big)  &= qt^{\beta}, \quad q=\lambda/\Gamma(1+\beta); \label{n57} \\
 	\V \Big( N_{\beta}(t,\lambda)\Big)  &=qt^{\beta}\Big[1+qt^{\beta}\Big(\frac{\beta B(\beta, 1/2)}{2^{2\beta-1}}-1\Big)\Big],\label{fppvar}
 \end{align}
 where   $B(a,b)$ denotes the beta function. An alternative form for Var[$N_{\beta}(t,\lambda)$] is given in \cite[eq. (2.8)]{BegOrs09} as
 \begin{align}\label{n59}
 	\V \Big( N_{\beta}(t,\lambda)\Big) 
 	= & q t^{\beta}  + \lmd^2 t^{2\bt} Q(\bt),
 \end{align}
where 
\begin{equation} \label{n510}
Q {(\bt) }=\frac{1}{\beta}\Big(\frac{1}{\Gamma(2\beta)}-\frac{1}{\beta\Gamma^{2}(\beta)}\Big).
\end{equation}

\noindent Also,  the covariance functions (see Leonenko {\it et al.} \cite{LMS14}) of the TFPP is given by 
\begin{align} \label{fppcov}
	\CV \Big( N_{\beta}(s,\lambda),N_{\beta}(t,\lambda)\Big)
	 &= qs^{\beta} + \lmd^2 \CV(E_{\bt}(s),  E_{\bt}(s)) \\
	 &=qs^{\beta}+ ds^{2\beta}+ q^{2}[\beta t^{2\beta}B(\beta,1+\beta;s/t)-(st)^{\beta}], \non
\end{align}
\noindent
$0<s\leq t$, where   $d=\beta q^{2}B(\beta, 1+\beta)$, and $B(a,b;x)=\int_{0}^{x}t^{a-1}(1-t)^{b-1}dt,~0<x<1$, is the incomplete beta function.

 \noindent It is also known that (see Meerschaert {\it et al.} \cite{MNV11}) when $0<\beta<1,$
 \begin{equation}\label{n511}
 	N_{\beta}(t,\lambda)  \st N(E_{\beta}(t),\lambda), 
 \end{equation}
 where $\{E_{\beta}(t)\}_{t\geq0}$ is the inverse $\beta$-stable subordinator and is independent of $\{N(t,\lambda)\}_{t\geq0}$.\\
 The PGF of the  TFPP is given by 
 \begin{align} \label{n512}
 	H_{N_\beta (t, \lmd)}(z) = \E (z^{	N_{\beta}(t,\lambda)})= M_\beta(-\lambda t^\beta (1-z)),~|z| \leq 1,
 	\end{align}
 see for example  Maheshwari and Vellaisamy\cite {MV19}. 

\subsection{Time Fractional Compound Poisson Process. }
 The study of compound fractional Poisson processes has been of recent interest; see, for example, Begin and Macci \cite{BegMac22} and Kataria and Khandakar \cite{KKMK22}. In this section, we introduce a new time-fractional compound Poisson process, motivated by the compound Poisson process discussed in Section 4.

\begin{definition}  \em  (i) Let $\{N_{\beta}(t, \lmd_j) \}, j \geq 1,$ be a sequence of independent time fractional  Poisson processes with
	parameters  $\beta$ and $\blmd= \{\lmd_j\}_{j \ge 1}$.   Let $Y_j$'s be i.i.d with distribution given in \eqref{neqn31} and is independent of the process 
	$\{N_{\beta}(t, \dt) \}$, where $\dt= \jsy \lmd_j.$ We call the process
	\begin{align} \label{n514}
		G_{\beta}(t, \blmd)=\sum_{j=1}^{\infty} j N_{\beta}(t, {\lmd}_j),
	\end{align}
a	 time fractional generalized Poisson process (TFGPP).
\end{definition}

\nin The next result shows that $G_{\beta}(t, \blmd)$ is indeed a  time fractional  compound Poisson processes,  and is also a subordinated GPP.
	\begin{proposition} \em
	Let  $G(t, \blmd)$ and  $G_{\beta}(t, \blmd)$  be  the GPP  and TFGPP respectively. 
	For fixed $t>0$,
	\begin{equation} \label{n515}
		G_{\beta}(t, \blmd)\st \sum_{j=1}^{N_{\beta}(t, \dt)}Y_j \st G_{P}(E_{\bt}(t), \dt),
	\end{equation}
	where $\{E_{\beta}(t)\}_{t\geq0}$ is an independent inverse $\beta$-stable subordinator. 
\end{proposition}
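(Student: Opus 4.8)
The plan is to establish the two distributional identities in \eqref{n515} separately, working through the probability generating functions and then invoking the subordination representation \eqref{n511} of the TFPP. First I would fix $t>0$ and compute the PGF of $G_{\beta}(t, \blmd)=\sum_{j=1}^{\infty} j N_{\beta}(t, {\lmd}_j)$ directly. Since the processes $\{N_{\beta}(t, \lmd_j)\}_{j\ge 1}$ are independent, the PGF factors as a product $\prod_{j=1}^{\infty}\mathbb{E}\big((z^{j})^{N_{\beta}(t, \lmd_j)}\big)$, and each factor is given by the TFPP formula \eqref{n512}, namely $M_{\beta}(-\lmd_j t^{\beta}(1-z^{j}))$. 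The key obstacle here — and the main point of departure from the non-fractional Lemma in Section 4 — is that the Mittag-Leffler function is \emph{not} multiplicative: $\prod_j M_{\beta}(-\lmd_j t^{\beta}(1-z^{j})) \ne M_{\beta}\big(-t^{\beta}\sum_j \lmd_j(1-z^{j})\big)$ in general, unlike the exponential in \eqref{neqn34}. So a naive factor-by-factor collapse of the product will fail, and I must route the argument through the subordinator instead.

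The clean way around this is to condition on the inverse stable subordinator. Using \eqref{n511}, each $N_{\beta}(t, \lmd_j) \st N(E_{\beta}(t), \lmd_j)$, and because the \emph{same} subordinator $E_{\beta}(t)$ drives all the component processes simultaneously (they are built from a common time change applied to independent ordinary Poisson processes), I would write $G_{\beta}(t, \blmd) \st \sum_{j=1}^{\infty} j N(E_{\beta}(t), \lmd_j) = G_{P}(E_{\beta}(t), \blmd)$, which is exactly the rightmost identity in \eqref{n515}. Here $G_P(\cdot, \blmd)$ is the GPP of \eqref{4.1}, and one conditions on $E_{\beta}(t)=x$, applies the ordinary GPP structure of Lemma~4.1 at deterministic time $x$, and then integrates against the density $h_{\beta}(x;t)$ of $E_{\beta}(t)$ from \eqref{instab-den}. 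This simultaneous time-change viewpoint is what makes the subordination work despite the failure of multiplicativity at the PGF level.

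For the middle identity $G_{\beta}(t, \blmd) \st \sum_{j=1}^{N_{\beta}(t, \dt)}Y_j$, I would again condition on $E_{\beta}(t)=x$ and reduce to the non-fractional compound representation \eqref{43a} of Lemma~4.1, which gives $G_P(x, \blmd) \st \sum_{j=1}^{N(x, \dt)}Y_j$ at deterministic time $x$. Replacing the deterministic $x$ by $E_{\beta}(t)$ and using $N(E_{\beta}(t), \dt) \st N_{\beta}(t, \dt)$ (again via \eqref{n511}, applied to the single aggregated process with rate $\dt=\sum_j \lmd_j$) yields the random sum $\sum_{j=1}^{N_{\beta}(t, \dt)}Y_j$ with the $Y_j$ independent of the time change. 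The essential consistency check is that the one subordinator $E_{\beta}(t)$ serves both the component-wise representation and the aggregated representation coherently; this is guaranteed because subordinating the GPP identity \eqref{43a} of Lemma~4.1 preserves the joint law of counts and marks under a common time change.

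I expect the main obstacle to be making the ``common subordinator'' claim rigorous rather than heuristic. One must verify that the independent TFPPs $\{N_{\beta}(t, \lmd_j)\}$ admit a \emph{joint} construction $N_{\beta}(t, \lmd_j)=N(E_{\beta}(t), \lmd_j)$ in which a single inverse stable subordinator $E_{\beta}(t)$, independent of the family $\{N(\cdot, \lmd_j)\}_{j\ge 1}$, is shared across all $j$; the marginal statement \eqref{n511} alone does not force this joint structure. If the TFGPP is \emph{defined} component-wise with independent subordinators, the two sides of \eqref{n515} need not agree, so I would either take the shared-subordinator construction as the operative definition of $G_{\beta}(t, \blmd)$ or cite it as the intended model. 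Once that joint representation is fixed, the remaining steps — conditioning on $E_{\beta}(t)=x$, applying Lemma~4.1, and integrating against $h_{\beta}(x;t)$ — are routine, and the equality of the two PGFs after conditioning closes the argument.
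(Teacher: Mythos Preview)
Your proposal is correct and follows essentially the same route as the paper: replace each $N_{\beta}(t,\lambda_j)$ by $N(E_{\beta}(t),\lambda_j)$ via \eqref{n511}, apply the non-fractional compound representation \eqref{43a} at the random time $E_{\beta}(t)$, and then use \eqref{n511} once more in the aggregate-rate direction. The paper simply writes this as a four-line chain of distributional equalities without the conditioning-and-integrating scaffolding you describe, but the content is identical.

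One remark: your observation about the ``common subordinator'' is well taken and in fact sharper than the paper's own proof, which applies the marginal identity \eqref{n511} coordinate-by-coordinate with a single $E_{\beta}(t)$ without commenting on why independent TFPPs admit such a joint coupling. The paper implicitly takes the shared-subordinator construction as the operative model (so that the product of conditional PGFs collapses as in \eqref{eqn45a} after conditioning on $E_{\beta}(t)$), and your suggestion to make this explicit is a genuine improvement in rigor rather than a departure in method.
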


\begin{proof}
\noi Note,  for fixed  $t > 0$,
\begin{align*} 
	G_{\beta}(t, \blmd)= & \sum_{j=1}^{\infty} j N_{\beta}(t, {\lmd}_j) \\
	\st & \sum_{j=1}^{\infty} j N(E_{\beta}(t), {\lmd}_j) ~~ (\text{using \eqref{n511}})\\
	\st & \sum_{j=1}^{N(E_{\beta}(t), {\dt})}Y_j ~~(\text{using \eqref{43a}})\\
    	\st & \sum_{j=1}^{N_{\beta}(t, \dt)}Y_j ~~(\text{using \eqref{n511}}).
\end{align*}
\noi From the above results and \eqref{neqn33}, we have
\begin{align} \label{n516}
	G_{\beta}(t, \blmd) \st \sum_{j=1}^{N(E_{\beta}(t), {\dt})}Y_j
 \st  G_{P}(E_{\bt}(t), \dt),
\end{align}
showing that it is a subordinated GPP.
\end{proof}

\nin Henceforth, we call the process $\sum_{j=1}^{N_{\beta}(t, \dt)}Y_j $ the time fractional compound Poisson process (TFCPP) and denote it by $H_{\beta}(t, \blmd).$ Similarly,  the time fractional compound Poisson process
of order $k$, denoted by $H_{\beta}(t, \blmd^{(k)})$, can be defined by replacing $\dt$ by $\dt_k$ in the result given in \eqref{n516}.

\vspace{.3cm}
\nin The PGF of the TFCPP is
\begin{align}\label{pgf}
	K_{H_{\beta}(t, \blmd)}(z)&=\mathbb{E}\Big( z^{H_{\beta}(t, \blmd)}\Big)\nonumber\\
	 &= K_{N_{\beta}(t, {\dt})} (K_{Y_1}(z))\nonumber\\
	&=M_{\beta}\Big(-\dt t^\beta\Big(1-K_{Y_1}(z) \Big)\Big)  ~~ (\text{using \eqref{n512}})  \nonumber\\
	&=M_{\beta}\Big(-t^{\beta} \sum_{j=1}^{\infty}\lambda_{j}
	   \Big(1-z^{j}\Big)\Big),
\end{align}
where $M_{\beta}$ is defined in \eqref{mlf22}.

\subsection{Mean, variance and Covariance Functions of TFCPP}
Let $H_{\beta}(t, \blmd)$ be the TFCPP defined in \eqref{n515}. Then  mean, variance and covariance functions of of $H_{\beta }(t, \blmd)$ for $s\le t$ are as follows.
Let $q=\dt /\Gamma(1+\beta)$. Then, from \eqref{n57} and \eqref{n59},
\begin{align}\label{519a} 
	\mathbb{E}\Big( H_{\beta}(t, \blmd)\Big)&=  \mathbb{E}(Y_1)\mathbb{E} \big(N_{\beta}(t, \dt)  \big)=qt^\beta \mathbb{E}(Y_1);	 
\end{align}
and
\begin{align}\label{519b}
\operatorname{Var}(H_{\beta}(t, \blmd))	&=\operatorname{Var} (Y_1) \mathbb{E} \big(N_{\beta}(t, \dt) +\mathbb{E}^2(Y_1) \operatorname{Var} (N_{\bt}(t, \dt)) \non  \\
    &=\operatorname{Var} (Y_1) \mathbb{E} \big(N_{\beta}(t, \dt) +\mathbb{E}^2(Y_1) \big[\mathbb{E} \big(N_{\beta}(t, \dt)\big) + \dt^2 t^{2\bt} Q(\bt) \big] \non \\
    &=\mathbb{E}(Y_1^2)\mathbb{E} \big(N_{\beta}(t, \dt)\big) + \mathbb{E}^2(Y_1) \dt^2 t^{2\bt} Q(\bt) \non  \\
    &=\mathbb{E}(Y_1^2)qt^\beta  + \mathbb{E}^2(Y_1)\dt^2 t^{2\bt} Q(\bt), 
\end{align}
where $Q(\bt) $ is defined in \eqref{n510}.

\noi Also, for $0<s \le t$,
\begin{align} \label{n522}
	\operatorname{Cov}(H_{\beta}(s, \blmd),H_{\bt}(t, \blmd))&=\operatorname{Var} (Y_1) qs^{\bt}+\mathbb{E}^2(Y_1) \operatorname{Cov} \big(N_{\bt}(s, \dt), N_{\bt}(t, \dt)\big),
\end{align}
where $\dt =\displaystyle\sum_{j=1}^{\infty}\lambda_{j}$. Note that $\operatorname{Var}(N_{\bt}(t, \dt))$ and $\operatorname{Cov}(N_{\bt}(s, \dt), N_{\bt}(t, \dt))$ are given in \eqref{fppvar} and \eqref{fppcov}, respectively.

\remark \em The TFCPP exhibits the over-dispersion property. For $t>0$,
\begin{align*}
	\operatorname{Var}(H_{\beta}(t, \blmd))-\mathbb{E}\Big( H_{\beta}(t, \blmd)\Big)
	&=qt^{\bt}[\mathbb{E}(Y_1^2)- \E(Y_1)]+\mathbb{E}^2(Y_1)\dt^2 t^{2\bt} Q(\bt) \\
	&=\frac{qt^{\bt}}{\dt} \jsy j(j-1) \lmd_j+\mathbb{E}^2(Y_1)\dt^2 t^{2\bt} Q(\bt)\\ 
    &>0,
\end{align*}
since $ Q(\bt) >0$ for $ 0 < \bt \in (0,1)$ (see Vellaisamy and Maheshwari \cite{PVAM18}, p. 87.).

\vspace{.3cm}

\begin{theorem} \em For $ 1 \le k \leq n$, let $$\Dt_{k,n}= \{(y_1, y_2,\ldots y_k): y_j \geq 1;  \sum_{j=1}^{k}y_{j}=n \}. $$
(i) The $PMF$ of  the TFCPP $H_{\beta}(t, \blmd))$ is
		\begin{align}
	\mathbb{P}\Big\{H_{\beta}(t, \blmd)=n\Big\}= \begin{cases}
	M_{\beta}(-\delta t^\beta), ~~\text{if}~~ n=0 \\
		\frac{1}{n!} \sum_{k=0}^{n}\Big\{ \sum_{ y_j \in \Dt_{k,n}} \prod_{i=1}^{k} \lambda_{y_i} \Big\} t^{k\beta} M_{\beta}^{(k)}\Big(-\dt t^{\beta}\Big),~~ \text{if}~~ n \ge 1	
     \end{cases}
	\end{align}	

(ii) Alternatively, in terms of Bell polynomials, 
\begin{equation} \label{n522a}
	\mathbb{P}\Big\{H_{\beta}(t, \blmd)=n\Big\}=\frac{1}{n!} \mathbb{E} \Big({e^{-\dt E_{\beta}(t)}}    {B}_{n}\big( u_1 E_{\beta}(t), u_2 E_{\beta}(t), \ldots, u_n E_{\beta}(t) \big) \Big)
\end{equation}
for $n\ge 0$ and $u_j= j! \lmd_j, 1 \le j \le n.$
\end{theorem}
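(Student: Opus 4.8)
The plan is to derive both formulas by conditioning on the inverse stable subordinator $E_{\beta}(t)$ and then reusing the compound-Poisson-distribution results already proved for the ordinary CPP. The starting point is the subordination identity \eqref{n515}, namely $H_{\beta}(t,\blmd) \st G_P(E_{\beta}(t),\dt) \st \sum_{j=1}^{N(E_{\beta}(t),\dt)} Y_j$, which says that the TFCPP is the ordinary CPP $H(\tau,\blmd)$ evaluated at the random time $\tau = E_{\beta}(t)$. Conditioning on $E_{\beta}(t)=\tau$ and invoking the one-dimensional PMF of the CPP from \eqref{n43} and its Bell-polynomial form \eqref{4.10}, I would write
\begin{align*}
	\mathbb{P}\{H_{\beta}(t,\blmd)=n\} = \mathbb{E}\Big(\mathbb{P}\{H(\tau,\blmd)=n\}\big|_{\tau=E_{\beta}(t)}\Big)
	= \mathbb{E}\Big(\frac{e^{-\dt E_{\beta}(t)}}{n!} B_n\big(u_1 E_{\beta}(t),\ldots,u_n E_{\beta}(t)\big)\Big),
\end{align*}
which is exactly part (ii), equation \eqref{n522a}. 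This step is essentially immediate once the conditioning is set up correctly, so the Bell-polynomial representation is the easy half.

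For part (i) the cleanest route is again to condition on $E_{\beta}(t)$, but now starting from the explicit sum form \eqref{n43} of the CPP PMF. For $n\ge 1$ this gives
\begin{align*}
	\mathbb{P}\{H_{\beta}(t,\blmd)=n\} = \sum_{k=1}^{n}\Big\{\sum_{y_j\in\Dt_{k,n}}\prod_{i=1}^{k}\lambda_{y_i}\Big\}\frac{1}{k!}\,\mathbb{E}\big(e^{-\dt E_{\beta}(t)} (E_{\beta}(t))^k\big),
\end{align*}
so the whole problem reduces to computing the moment-type quantity $\mathbb{E}(e^{-\dt E_{\beta}(t)}(E_{\beta}(t))^k)$. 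The key computational step is the identity
\begin{align*}
	\mathbb{E}\big(e^{-\dt E_{\beta}(t)}(E_{\beta}(t))^k\big) = (-1)^k \frac{d^k}{d\dt^k}\,\mathbb{E}\big(e^{-\dt E_{\beta}(t)}\big) = (-1)^k \frac{d^k}{d\dt^k} M_{\beta}(-\dt t^{\beta}) = t^{k\beta} M_{\beta}^{(k)}(-\dt t^{\beta}),
\end{align*}
where $\mathbb{E}(e^{-\dt E_{\beta}(t)}) = M_{\beta}(-\dt t^{\beta})$ is the Laplace transform of the inverse stable subordinator (this is the $z=0$, or equivalently the Laplace, specialization underlying \eqref{n512}). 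Substituting this back and noting that the $k=0$ term contributes $M_{\beta}(-\dt t^{\beta})$ (matching the $n=0$ case and allowing the sum to start at $k=0$) yields part (i).

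The main obstacle, such as it is, is justifying the interchange of the expectation $\mathbb{E}(\cdot)$ over $E_{\beta}(t)$ with the differentiation in $\dt$ (equivalently, differentiating under the integral sign and differentiating the Mittag-Leffler series term by term). This is controlled by the summability hypotheses imposed on $\blmd$ at the definition of the GPD, namely $\dt=\sum_j \lambda_j<\infty$ together with $\sum_j t^j\lambda_j<\infty$ for small $|t|$, which guarantee the relevant series and integrals converge and that the finite-order derivatives $M_{\beta}^{(k)}$ exist and are finite; since only finitely many derivatives ($k\le n$) appear, no delicate uniform bound is needed. I would verify the $n=0$ case separately (it is just $\mathbb{P}\{N_{\beta}(t,\dt)=0\}=M_{\beta}(-\dt t^{\beta})$ from \eqref{n513}) and then present the two parts as conditioning arguments, with the Laplace-transform/derivative identity as the single computational lemma doing the real work.
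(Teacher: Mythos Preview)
Your argument is correct and for part (ii) it is identical to the paper's. For part (i) you take a slightly different path: you condition on $E_{\beta}(t)$ and then evaluate $\tfrac{1}{k!}\mathbb{E}\big(e^{-\dt E_{\beta}(t)}(E_{\beta}(t))^k\big)$ by differentiating the Laplace transform $\mathbb{E}(e^{-\dt E_{\beta}(t)})=M_{\beta}(-\dt t^{\beta})$ in $\dt$. The paper instead conditions directly on $N_{\beta}(t,\dt)=k$ and cites the TFPP mass function \eqref{n513}, namely $p_{\beta}(k\mid t,\dt)=\tfrac{(\dt t^{\beta})^k}{k!}M_{\beta}^{(k)}(-\dt t^{\beta})$. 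Since $\tfrac{1}{k!}\mathbb{E}\big(e^{-\dt E_{\beta}(t)}(\dt E_{\beta}(t))^k\big)=\mathbb{P}\{N(E_{\beta}(t),\dt)=k\}=\mathbb{P}\{N_{\beta}(t,\dt)=k\}$ by \eqref{n511}, your Laplace-differentiation lemma is precisely a rederivation of \eqref{n513}; it is a correct and self-contained detour, whereas the paper's route is shorter because \eqref{n513} is already on the shelf. Either way the justification issue you flag (interchanging expectation and differentiation) is mild, since only finitely many derivatives appear and $M_{\beta}$ is entire.
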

\begin{proof}
	(i) From \eqref{n513}, we have $\mathbb{P}\Big\{H_{\beta}(t, \blmd)=0\Big\}= \mathbb{P}\Big\{N_{\beta}(t,\lambda)=0\Big\}=M_{\beta}(-\delta t^\beta). $
	 For  $n\ge 1$, 	
	\begin{align*}
		\mathbb{P}\Big\{H_{\beta}(t, \blmd)=n\Big\}&=\sum_{k=1}^{n} \mathbb{P}\Big\{Y_{1}+Y_{2}+\cdots+Y_{k}=n\Big\} \mathbb{P}\Big\{N_{\beta}(t,\dt)=k\Big\}  \quad ( \because y_j \geq 1)\\ 
		&=\sum_{k=1}^{n}\sum_{\Dt_{k,n}}\mathbb{P}\Big\{Y_{1}=y_{1}, Y_{2}=y_{2}, \ldots, Y_{k}=y_{k}\Big\}\mathbb{P}\Big\{N_{\beta}(t,\dt)=k\Big\} \\
		&=\sum_{k=1}^{n}\sum_{\Dt_{k,n}} \prod_{i=1}^{k} \mathbb{P}\Big\{Y_{i}=y_{i}\Big\}\mathbb{P}\Big\{N_{\beta}(t,\dt)=k\Big\} \\
		&= \frac{1}{n!} \sum_{k=1}^{n}\Big\{ \sum_{ y_j \in \Dt_{k,n}}  \frac{\lambda_{y_1} \cdots \lmd_{y_k}}{\dt^k}\Big\}(\dt t^{\beta})^k M_{\beta}^{(k)}\Big(-\dt t^{\beta}\Big) ~~(\text{using}~\eqref{n513} )\\
		&=  \frac{1}{n!} \sum_{k=1}^{n}\Big\{ \sum_{ y_j \in \Dt_{k,n}} {\lambda_{y_1} \cdots \lmd_{y_k}}\Big\} t^{\beta k} M_{\beta}^{(k)}\Big(-\dt t^{\beta}\Big).
	\end{align*}
(ii) Note that,  for fixed $t$, and $ n \geq 0$,
\begin{align*}
	\mathbb{P}\Big\{H_{\beta}(t, \blmd)=n\Big\}&= \mathbb{P}\Big(  \sum_{j=1}^{N(E_{\beta}(t), ~{\dt})}Y_j=n \Big)  \\
    &= \mathbb{E}\Big(\mathbb{P}\Big(\sum_{j=1}^{N(E_{\beta}(t){\dt})}Y_j=n 
    \mid  E_{\beta}(t)\Big)\Big)   \\
    &= \frac{1}{n!} \mathbb{E} \Big({e^{-\dt E_{\beta}(t)}}    {B}_{n}\big(u_1 E_{\beta}(t), u_2 E_{\beta}(t), \ldots, u_n E_{\beta}(t) \big) \Big),	
\end{align*}
using \eqref{4.10}. The density of $ E_{\beta}(t)$ is given in \eqref{instab-den}
\end{proof}

\begin{remark} \em
	Let the rv $W_\beta$ denote the waiting time of the first TFCPP event.  
	Then, the distribution of $W_\beta$ satisfies 
	\[
	\mathbb P(W_\beta > t) = \mathbb P(H_\beta(t; \lmd) = 0) = M_\beta(-\lambda t^\beta), \quad t > 0,
	\]  
	which coincides with the first waiting time of TFPP (see Beghin and Orsingher \cite{BegOrs09}).  Also, it is known that 
	that the TFPP is a renewal process (see Meerschaert {\it et al}. \cite{MNV11}).  
	Since one-dimensional distributions of TFPP and TFCPP are different,  the TFCPP  is not a renewal process.
	
\end{remark}

\begin{theorem} \em
	The one-dimensional distributions of $H_{\beta}(t, \blmd)$ are not infinitely divisible.
\end{theorem}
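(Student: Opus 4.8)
The plan is to prove that the one-dimensional law of $H_\beta(t,\blmd)$ fails to be infinitely divisible by exploiting the fact that a \emph{non-degenerate integer-valued} infinitely divisible distribution must be a compound Poisson distribution, and in particular must assign \emph{strictly positive} mass to the point $0$. More precisely, I would invoke the classical characterization (see e.g. Steutel and van Harn, or the discrete L\'evy--Khintchine representation): a $\Z_+$-valued random variable $X$ is infinitely divisible if and only if its PGF has the form $H_X(z)=\exp\!\big(\dt_0(Q(z)-1)\big)$ for some $\dt_0\ge 0$ and some PGF $Q$ of a $\N$-valued variable; equivalently, $\log H_X(z)$ has nonnegative Taylor coefficients in $z-0$ about a suitable expansion and $H_X(z)=\exp(g(z))$ with $g$ a convergent power series vanishing at a point. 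The key structural consequence I will use is that for such $X$ the probabilities satisfy a positivity/recursion (the Katti--Panjer type recurrence) forcing $\mathbb P(X=0)>0$ together with the multiplicative semigroup property $H_X=(H_{X_{1/m}})^m$ for every $m\ge1$.

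The first concrete step is to write down the PGF of $H_\beta(t,\blmd)$, which is already supplied in the excerpt in \eqref{pgf}:
\begin{equation*}
	K_{H_{\beta}(t, \blmd)}(z)=M_{\beta}\Big(-t^{\beta} \sum_{j=1}^{\infty}\lambda_{j}\big(1-z^{j}\big)\Big).
\end{equation*}
Next I would suppose, for contradiction, that this law is infinitely divisible. Then for each integer $m\ge 2$ there must exist a $\Z_+$-valued random variable $Z_m$ with PGF $K_m(z)$ satisfying $K_m(z)^m=K_{H_\beta(t,\blmd)}(z)$; equivalently $K_m(z)=\big[M_\beta(\,\cdot\,)\big]^{1/m}$, and $K_m$ must itself be a genuine PGF (analytic on the unit disc with nonnegative Taylor coefficients and $K_m(1)=1$). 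The strategy is then to show that the $m$-th root of the Mittag--Leffler composite on the right-hand side cannot have all nonnegative power-series coefficients, which contradicts $K_m$ being a PGF.

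The cleanest route to the contradiction is to compare against the genuinely infinitely divisible case $\beta=1$, where $M_1(x)=e^x$ and the right-hand side collapses to $\exp\!\big(-t\sum_j\lambda_j(1-z^j)\big)$, i.e.\ exactly the compound-Poisson PGF of the CPP $H(t,\blmd)$ from Section~4, which \emph{is} infinitely divisible. For $0<\beta<1$, however, $M_\beta$ is \emph{not} an exponential, and its composition $M_\beta(-\dt t^\beta(1-Q(z)))$ with the inner PGF $Q(z)=\sum_j z^j\lambda_j/\dt$ is known to fail infinite divisibility; the mechanism is that $M_\beta(-x)$ for $0<\beta<1$, while completely monotone in $x$, does \emph{not} have a logarithm that is again the negative of a Bernstein/Laplace-exponent structure compatible with the discrete L\'evy measure being nonnegative. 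Concretely I would examine the low-order Taylor coefficients of $\log K_{H_\beta(t,\blmd)}(z)$ in $z$ about $z=0$: for a $\Z_+$-valued infinitely divisible law these coefficients (the canonical L\'evy/Khinchine masses $r_n$, $n\ge1$) must all be nonnegative, whereas a direct computation using $M_\beta(x)=1+x/\Gamma(1+\beta)+x^2/\Gamma(1+2\beta)+\cdots$ and $\Gamma(1+2\beta)<\Gamma(1+\beta)^2$-type inequalities (equivalently $Q(\beta)>0$ from \eqref{n510}, already established in the over-dispersion remark) yields a \emph{negative} coefficient at the appropriate order. This is exactly the same convexity defect that produced the extra positive term $\mathbb E^2(Y_1)\dt^2t^{2\beta}Q(\beta)$ in the variance \eqref{519b}.

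The main obstacle I anticipate is making the ``negative canonical coefficient'' step fully rigorous and self-contained rather than merely asserting that $M_\beta$ is not log-completely-monotone. The delicate point is that infinite divisibility of a \emph{discrete} law is governed by the sign of the coefficients $r_n$ in $\log K(z)=\log K(0)+\sum_{n\ge1}r_n z^n$ (so that $\sum r_n=-\log K(0)$ converges and all $r_n\ge0$), and one must extract the first such coefficient that turns negative and bound it away from zero uniformly enough to contradict nonnegativity. I would carry this out by computing $\log M_\beta(-\dt t^\beta(1-z^{\,\cdot}))$ to second order in the inner variable, reducing the sign question to the single scalar inequality
\begin{equation*}
	\frac{1}{\Gamma(1+2\beta)}-\frac{1}{\Gamma(1+\beta)^2}<0,\qquad 0<\beta<1,
\end{equation*}
which is precisely the positivity of $Q(\beta)$ invoked just above, and then translating this strict inequality into a strictly negative coefficient $r_n$ for a suitable $n$. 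Once that single sign is pinned down, the contradiction with the requirement that every $r_n\ge0$ for an infinitely divisible $\Z_+$-valued law closes the argument.
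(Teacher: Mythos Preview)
Your strategy --- show that some coefficient $r_n$ in $\log K_{H_\beta(t,\blmd)}(z)=\log K(0)+\sum_{n\ge1}r_nz^n$ is strictly negative, contradicting Katti's criterion --- is in principle legitimate, but the execution you sketch does not go through. Two concrete problems:

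\begin{enumerate}
\item You propose to expand $\log M_\beta$ ``to second order in the inner variable'' and land on the scalar inequality $\Gamma(1+\beta)^2<\Gamma(1+2\beta)$. But the canonical masses $r_n$ are Taylor coefficients in $z$ at $z=0$, where the inner argument of $M_\beta$ equals $-\delta t^\beta$, \emph{not} $0$. A Maclaurin expansion of $M_\beta(u)$ therefore controls behaviour near $z=1$ (factorial cumulants), not the $r_n$'s. If instead you treat $c=\delta t^\beta$ as small and expand consistently, then in the simplest case $Y_1\equiv 1$ one finds
\[
r_2=\tfrac{c^2}{2}\Big(\tfrac{2}{\Gamma(1+2\beta)}-\tfrac{1}{\Gamma(1+\beta)^2}\Big)+O(c^3),
\]
and for $\beta=1/2$ this is $c^2(1-2/\pi)/1>0$: the missing factor $1/2$ from $\log(1+x)=x-\tfrac{x^2}{2}+\cdots$ flips the sign you need.
\item Worse, for every $c>0$ one has $r_2\ge 0$ in the $Y_1\equiv1$ case: $c\mapsto M_\beta(-c)$ is completely monotone, hence log-convex, which gives $M_\beta''(-c)M_\beta(-c)\ge M_\beta'(-c)^2$ and thus $r_2=\tfrac{c^2}{2}\big(M_\beta''M_\beta-(M_\beta')^2\big)/M_\beta^2\big|_{-c}\ge0$. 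So no second-order argument can produce the required negative coefficient; you would have to identify and control a specific higher $r_n$, uniformly in $t$, which is substantially harder than you indicate and is not reducible to the single gamma inequality you wrote down.
\end{enumerate}

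The paper's argument sidesteps all of this. It uses the subordination $H_\beta(t,\blmd)\stackrel{\mathscr{L}}{=}G(E_\beta(t),\blmd)$ together with $E_\beta(t)\stackrel{\mathscr{L}}{=}t^\beta E_\beta(1)$ and the law-of-large-numbers result \eqref{limit} to obtain $H_\beta(t,\blmd)/t^\beta\stackrel{\mathscr{L}}{\longrightarrow}\delta\,\mathbb E(Y_1)\,E_\beta(1)$ as $t\to\infty$; since weak limits of infinitely divisible laws are infinitely divisible, this would force $E_\beta(1)$ to be infinitely divisible, contradicting the known fact (Steutel--van Harn) that it is not.
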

\begin{proof} Using the well-known result (see \cite{MS13}) that $ E_{\beta}(t) \st t^{\beta}
	E_{\beta}(1)$, we obtain
	\begin{align*}
   \frac{H_{\beta}(t, \blmd)}{t^{\beta}} & \st  \frac{G(E_{\beta}(t), \blmd)}{t^{\beta}} ~~(\text{use \eqref{n515}}) \\
   & \st \frac{G(t^{\bt}E_{\beta}(1), \blmd)}{t^{\beta}}   \\
   & \st E_{\beta}(1) \frac{G(t^{\bt}E_{\beta}(1), \blmd)}{t^{\beta}
   	 E_{\beta}(1)} \\
  & \stackrel{\mathscr{L}}{\rightarrow} E_{\beta}(1) \dt \mathbb{E} (Y_1),
\end{align*}
as $t \to \infty$, using \eqref{limit}.	

\noi Suppose $ H_{\beta}(t, \blmd), t>0, $ is infinitely divisible (i.d.). Then this implies
$\frac{H_{\beta}(t, \blmd)}{t^{\beta}}$	is i.d. for every $t>0$ and also its limit
$ E_{\beta}(1) \dt \mathbb{E} (Y_1)$ or equivalently $E_{\beta}(1) $ is  also i.d.
(see Sato \cite{ST99}). But this is a contradiction, since $E_{\beta}(t) $ is not i.d. for any $t>0$
(see Steutel and Van Harn \cite{SVH04}).	
\end{proof}

\subsection{Moments and factorial moments of TFCPP}
 For real-valued functions $f$ and $g$, let $f^{(k)}$ denote its $k$-th derivative and $g(f)$ denote the composite function. The following two results are from Johnson \cite{John02}, equation (3.3) and equation (3.6).

\noi \textbf{Hoppe's formula}. If $g$ and $f$ are functions with a sufficient number of derivatives, then
\begin{equation}\label{eqn61}
   (g(f) )^{(m)}=\sum_{k=0}^{m} \frac{g^{(k)}(f)}{k !} A_{k, m}(f),
\end{equation} 
where  $ A_{0, 0}=1$, $ A_{0, m}=0$ for $m \ge 1$  and  
\begin{equation} \label{eqn62}
A_{k, m}(f)=\sum_{j=0}^{k}\binom{k}{j}(-f)^{k-j} (f^{j})^{(m)}, \quad 1 \le k \le m.
\end{equation}

\noi The next lemma is from \cite[eq. (3.6)]{John02}.
\begin{lemma}\label{jl}  \em
	(i): If $f_{1}, f_{2}, \ldots, f_{k}$ are functions with a sufficient number of derivatives, then
\begin{equation} \label{eqn63}
	\Big(f_{1} f_{2} \cdots f_{k}\Big)^{(m)}=\sum_{j_{1}+\cdots+j_{k}=m}\binom{m}{j_{1}, \ldots, j_{k}} f_{1}^{\left(j_{1}\right)} \cdots f_{k}^{\left(j_{k}\right)}.
\end{equation}
 (ii): When $f_i=f, 1 \le i \le k$, 
	\begin{equation} \label{eqn64}
	(f^k)^{(m)}=\sum_{j_{1}+\cdots+j_{k}=m}\binom{m}{j_{1}, \ldots, j_{k}} f^{\left(j_{1}\right)} \cdots f^{\left(j_{k}\right)}.
\end{equation}
\end{lemma}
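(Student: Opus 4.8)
The plan is to establish part (i) by induction on the number $k$ of factors---it is precisely the generalized Leibniz rule for the $m$-th derivative of a product---and then to obtain part (ii) as an immediate specialization. For the base case $k=1$ the claim reduces to $f_{1}^{(m)}=\binom{m}{m}f_{1}^{(m)}$, and for $k=2$ it is the classical Leibniz formula $(f_{1}f_{2})^{(m)}=\sum_{j=0}^{m}\binom{m}{j}f_{1}^{(j)}f_{2}^{(m-j)}$, upon noting that $\binom{m}{j}$ equals the multinomial coefficient $\binom{m}{j,\,m-j}$.

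For the inductive step I would assume the formula for some $k\ge 1$, write $f_{1}\cdots f_{k+1}=(f_{1}\cdots f_{k})\,f_{k+1}$, and apply the two-function Leibniz rule with $g=f_{1}\cdots f_{k}$ to get $(g\,f_{k+1})^{(m)}=\sum_{i=0}^{m}\binom{m}{i}g^{(i)}f_{k+1}^{(m-i)}$. Expanding $g^{(i)}$ by the induction hypothesis produces a double sum over $i$ and over the nonnegative integers with $j_{1}+\cdots+j_{k}=i$. Setting $j_{k+1}=m-i$, the crucial step is the coefficient identity $\binom{m}{i}\binom{i}{j_{1},\ldots,j_{k}}=\binom{m}{j_{1},\ldots,j_{k},j_{k+1}}$, which is verified directly by writing both sides as ratios of factorials. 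Once this is in place, the summation index $i$ is subsumed into the single constraint $j_{1}+\cdots+j_{k+1}=m$, the double sum collapses to the claimed multinomial sum over $k+1$ factors, and the induction closes.

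Part (ii) then follows at once from part (i) by taking $f_{i}=f$ for all $1\le i\le k$, so that every factor $f_{i}^{(j_{i})}$ becomes $f^{(j_{i})}$. The argument is entirely routine; the only place requiring care is the bookkeeping in the inductive step, specifically the reindexing that merges the outer Leibniz sum with the inner multinomial sum, together with the accompanying factorial identity for the coefficients. I therefore expect no genuine analytic obstacle, and the main effort to lie in presenting this coefficient collapse cleanly rather than in overcoming any substantive difficulty.
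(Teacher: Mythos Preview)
Your inductive argument via the two-factor Leibniz rule is correct and entirely standard; the coefficient identity $\binom{m}{i}\binom{i}{j_{1},\ldots,j_{k}}=\binom{m}{j_{1},\ldots,j_{k},m-i}$ is immediate from the factorial definitions, so the bookkeeping you flag poses no difficulty. The paper, however, does not prove this lemma at all: it simply quotes the result from Johnson \cite[eq.~(3.6)]{John02} and moves on, so your write-up supplies a self-contained proof where the paper relies on citation.
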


\noi Note that the moment generating function $K_{\beta}(s, t)$, $s\ge 0,$ of TFCPP is (see
\eqref{pgf})
\begin{equation}\label{mgf}
	K_{\beta}(s, t)=\mathbb{E}\Big(e^{-s H_{\beta}(t, \blmd)}\Big)=M_{\beta }\Big(t^{\beta} \sum_{j=1}^{\infty}\lambda_{j}\Big(e^{-s j}-1\Big)\Big). 
\end{equation}

\begin{theorem} \em  Let $Y_j$'s be i.i.d. rvs with distribution given in \eqref{neqn32} and $T_k= \sum_{j=1}^{k} Y_j$,
	for $ k \ge 1.$
	Then the $r$-th raw moment of the TFCPP is given by
	\begin{align*}
		\mathbb{E}\Big( H_{\beta}^{r}(t, \blmd)\Big)
        =& 	 \sum_{k=1}^{r} \frac{t^{k \beta} \dt^k}{\Gamma(k \beta+1)} \E(T_k^r).
		\end{align*}
\end{theorem}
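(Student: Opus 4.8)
The plan is to start from the moment generating function of the TFCPP and differentiate it $r$ times, using the Hoppe--Johnson machinery recalled just above the statement. Writing $K_{\beta}(s,t)=M_{\beta}(f(s))$ with $f(s)=t^{\beta}\sum_{j\ge 1}\lmd_{j}(e^{-sj}-1)$ as in \eqref{mgf}, I note that $f(0)=0$ and that $\E\big(H_{\beta}^{r}(t,\blmd)\big)=(-1)^{r}\,\partial_{s}^{r}K_{\beta}(s,t)\big|_{s=0}$. The factor $t^{k\beta}/\Gamma(k\beta+1)$ appearing in the target formula is already anticipated by the subordination representation \eqref{n515}, $H_{\beta}(t,\blmd)\st H(E_{\beta}(t),\blmd)$, together with the classical inverse-stable moment $\E\big(E_{\beta}(t)^{k}\big)=k!\,t^{k\beta}/\Gamma(k\beta+1)$ obtainable from the density \eqref{instab-den}; conditioning on $E_{\beta}(t)$ reduces the problem to the polynomial $M_{r}(u)=\E\big(H(u,\blmd)^{r}\big)$ of the ordinary CPP, and the two routes must agree.

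Next I would apply Hoppe's formula \eqref{eqn61}--\eqref{eqn62} to $M_{\beta}(f)$. Because $f(0)=0$, the factor $(-f)^{k-j}$ in \eqref{eqn62} annihilates every term with $j<k$, so $A_{k,r}(f)\big|_{s=0}$ collapses to the single term $(f^{k})^{(r)}(0)$; moreover the one-parameter series \eqref{mlf22} gives $M_{\beta}^{(k)}(0)/k!=1/\Gamma(k\beta+1)$. This yields
\[
\E\big(H_{\beta}^{r}(t,\blmd)\big)=\sum_{k=0}^{r}\frac{(-1)^{r}}{\Gamma(k\beta+1)}\,(f^{k})^{(r)}(0),
\]
in which the $k=0$ term vanishes (as $(f^{0})^{(r)}(0)=0$ for $r\ge 1$), so that $k$ effectively ranges from $1$ to $r$.

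It then remains to evaluate $(f^{k})^{(r)}(0)$. Using Lemma~\ref{jl}(ii), equation \eqref{eqn64}, and the fact that $f(0)=0$ forces every exponent to be positive, only the compositions $j_{1}+\cdots+j_{k}=r$ with all $j_{i}\ge 1$ survive; since $f^{(j)}(0)=(-1)^{j}t^{\beta}\dt\,\E\big(Y_{1}^{\,j}\big)$ for $j\ge 1$, one obtains
\[
(-1)^{r}(f^{k})^{(r)}(0)=t^{k\beta}\dt^{k}\!\!\sum_{\substack{j_{1}+\cdots+j_{k}=r\\ j_{i}\ge 1}}\binom{r}{j_{1},\ldots,j_{k}}\prod_{i=1}^{k}\E\big(Y_{1}^{\,j_{i}}\big).
\]

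The main obstacle is the final identification of this constrained multinomial sum with $\E(T_{k}^{r})$. The sum above equals $t^{k\beta}\dt^{k}\,k!\,B_{r,k}\big(\E Y_{1},\E Y_{1}^{2},\ldots\big)$, which by binomial inversion is $t^{k\beta}\dt^{k}\sum_{i=0}^{k}(-1)^{k-i}\binom{k}{i}\E\big(T_{i}^{\,r}\big)$, whereas $\E(T_{k}^{r})=\E\big((Y_{1}+\cdots+Y_{k})^{r}\big)$ is the \emph{unconstrained} multinomial sum (allowing parts $j_{i}=0$). These agree for $k=1$, where $\E(T_{1}^{r})=\E(Y_{1}^{r})=B_{r,1}$, so the $r=1$ case reproduces $qt^{\beta}\E(Y_{1})$ in agreement with \eqref{519a}. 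For $k\ge 2$ the constrained and unconstrained sums differ, so the crux is to pin down exactly which combinatorial factor enters each summand; I would cross-check the proposed closed form against the known variance \eqref{519b} (the case $r=2$), since that comparison is what ultimately fixes the coefficient multiplying $\E(T_{k}^{r})$.
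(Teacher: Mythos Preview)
Your route is identical to the paper's: differentiate the MGF \eqref{mgf} via Hoppe's formula \eqref{eqn61}--\eqref{eqn62}, use $M_{\beta}^{(k)}(0)=k!/\Gamma(k\beta+1)$, reduce $A_{k,r}$ to $t^{k\beta}(g^{k})^{(r)}(0)$ with $g(s)=\sum_{j}\lambda_{j}(e^{-sj}-1)$, and expand $(g^{k})^{(r)}$ by Lemma~\ref{jl}(ii). You have not missed an idea; on the contrary, you have located a genuine slip in the paper's argument.

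The paper's equation \eqref{em4} writes
\[
(g^{k})^{(r)}(0)=(-1)^{r}\sum_{n_{1}+\cdots+n_{k}=r}\binom{r}{n_{1},\ldots,n_{k}}\prod_{i=1}^{k}\sum_{j\ge 1}\lambda_{j}j^{\,n_{i}}
=(-1)^{r}\dt^{k}\E(T_{k}^{r}),
\]
but when some $n_{i}=0$ the factor in the Leibniz expansion is $g(0)=0$, \emph{not} $\sum_{j}\lambda_{j}j^{0}=\dt$. The correct evaluation is exactly your constrained sum
\[
(-1)^{r}(g^{k})^{(r)}(0)=\dt^{k}\!\!\sum_{\substack{n_{1}+\cdots+n_{k}=r\\ n_{i}\ge 1}}\binom{r}{n_{1},\ldots,n_{k}}\prod_{i=1}^{k}\E\big(Y_{1}^{\,n_{i}}\big)
=\dt^{k}\,k!\,B_{r,k}\big(\E Y_{1},\E Y_{1}^{2},\ldots\big),
\]
and for $k\ge 2$ this is \emph{not} $\dt^{k}\E(T_{k}^{r})$. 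Your proposed $r=2$ check against \eqref{519a}--\eqref{519b} confirms this: since $\beta\Gamma(2\beta)=\tfrac{1}{2}\Gamma(2\beta+1)$ one has $Q(\beta)+\Gamma(\beta+1)^{-2}=2/\Gamma(2\beta+1)$, so
\[
\E\big(H_{\beta}^{2}(t,\blmd)\big)=qt^{\beta}\E(Y_{1}^{2})+\frac{2\dt^{2}t^{2\beta}}{\Gamma(2\beta+1)}\,\E(Y_{1})^{2},
\]
whereas the stated formula with $\E(T_{2}^{2})=2\E(Y_{1}^{2})+2\E(Y_{1})^{2}$ would produce an extra, spurious $\tfrac{2\dt^{2}t^{2\beta}}{\Gamma(2\beta+1)}\E(Y_{1}^{2})$. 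So the obstacle you flagged is not a gap in your argument but an error in the paper: the displayed theorem should carry $k!\,B_{r,k}\big(\E Y_{1},\E Y_{1}^{2},\ldots\big)$ (equivalently, $\sum_{i=0}^{k}(-1)^{k-i}\binom{k}{i}\E(T_{i}^{r})$) in place of $\E(T_{k}^{r})$, and that corrected version is precisely what your computation establishes.
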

\begin{proof}
	Using Hoppe's formula  in \eqref{eqn61}, we get
	\begin{align*}
			\mathbb{E}\Big( H_{\beta}^{r}(t, \blmd)\Big) &=(-1)^r\Big.\frac{\partial^{r} K_{\beta}(s, t)}{\partial s^{r}}\Big|_{s=0} \\
		&=\Big.\sum_{k=0}^{r} \frac{(-1)^r}{k !} M_{\beta, 1}^{(k)}\Big(t^{\beta} \sum_{j=1}^{\infty}\lambda_{j}\Big(e^{-s j}-1\Big) \Big) A_{k, r}\Big(t^{\beta} \sum_{j=1}^{\infty}\lambda_{j}\Big(e^{-s j}-1\Big)\Big)\Big|_{s=0}\numberthis\label{em1},
	\end{align*}
	where $A_{k, r}$, for $0 \le k \le r$, is defined in  \eqref{eqn62}.\\
  \noi   From \eqref{2.5}, we obtain 
  \begin{align*}
  	\Big. M_{\beta}^{(k)}\Big(t^{\beta} \sum_{j=1}^{\infty}\lambda_{j}\Big(e^{-s j}-1\Big)\Big)\Big|_{s=0} &=\Big.k ! M_{\beta, k \beta+1}^{k+1}\Big(t^{\beta} \sum_{j=1}^{\infty}\lambda_{j}\Big(e^{-s j}-1\Big)\Big)\Big|_{s=0} \\
  	&=\frac{k !}{\Gamma(k \beta+1)},\numberthis\label{em2}
  \end{align*}  
using the fact that  $ M_{\al, \bt}^{\ga}(0)={1}/{\Gamma(\beta)}.$\\
\noi Next, for $ 0 \le k \le r$,
  	\begin{align*}
		&\Big. A_{k, r}\Big(t^{\beta} \sum_{j=1}^{\infty}\lambda_{j}\Big(e^{-s j}-1\Big)\Big)\Big|_{s=0} \\
		&\quad=\Big.\sum_{m=0}^{k}\binom{k}{m}\Big(-t^{\beta} \sum_{j=1}^{\infty}\lambda_{j}\Big(e^{-s j}-1\Big)\Big)^{k-m} \frac{d^{r}}{ds^{r}}\Big(t^{\beta} \sum_{j=1}^{\infty}\lambda_{j}\Big(e^{-s j}-1\Big)\Big)^{m}\Big|_{s=0} \\
		&\quad=\Big.t^{k \beta} \frac{d^{r}}{ds^{r}}\Big(\sum_{j=1}^{\infty}\lambda_{j}\Big(e^{-s j}-1\Big)\Big)^{k}\Big|_{s=0}\numberthis\label{em3},
	\end{align*}
The last step follows since only the case corresponding to $m=k$ remains in its previous step and also by using $\sum\limits_{j=1}^{\infty}\lambda_{j}\Big(z^{j}-1\Big)|_{z=1}=0$.

\noi 	Now, by Lemma \ref{jl}, we get
	\begin{align*}
		\Big.\frac{d^{r}}{ds^{r}}\Big(\sum_{j=1}^{\infty}\lambda_{j}\Big(e^{-s j}-1\Big)\Big)^{k}\Big|_{s=0} &=(-1)^r  \sum_{ n_1+\ldots+ n_{k}=r} \binom{r}{n_1, \ldots, n_k}\prod_{i=1}^{k}  \frac{d^{n_{i}}}{d s^{n_{i}}}\Big(\sum_{j=1}^{\infty}\lambda_{j}\Big(e^{-s j}-1\Big)\Big)\Biggr|_{\rho=0} \\
		&=(-1)^r   \sum_{ n_1+\ldots + n_{k}=r} \binom{r}{n_1, \ldots, n_k} \prod_{i=1}^{k}  \sum_{j=1}^{\infty}\lambda_{j}j^{n_{i}} \\
     &=(-1)^r  \dt^k  \sum_{ n_1+\ldots + n_{k}=r} \binom{r}{n_1, \ldots, n_k} \prod_{i=1}^{k}
        \E(Y_i^{n_i})\\
            &=(-1)^r \dt^k \E(Y_1+\ldots+Y_k)^{r}. \numberthis\label{em4}
	\end{align*}
Substituting \eqref{em2}-\eqref{em4} in \eqref{em1}, we get the result.
\end{proof}

\noi For a random variable $Z$, let $Z^{(r)}= Z(Z-1)\cdots(Z_r+1)$ so that $\E(Z^{(r)})$ denote its $r$-th factorial moment.

\begin{theorem}  \em
	The $r$-th factorial moment $\mathcal{M}_{r}^{\beta}(t) $ of the TFCPP is given by
	\begin{equation*}
\E(H^{(r)}_{\beta}(t, \blmd))	= \sum_{k=1}^{r} \frac{t^{k \beta}\dt^k}{\Gamma(k \beta+1)}
	\sum_{ n_1+\ldots, n_{k}=r} \binom{r}{n_1, \ldots + n_k}  \prod_{i=1}^{k} 
	\E(Y^{(n_i)}).	
	\end{equation*}
\end{theorem}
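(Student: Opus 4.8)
The plan is to mirror the proof of the preceding (raw moment) theorem, but to work with the probability generating function (PGF) rather than the moment generating function, since for an integer-valued random variable $Z$ the $r$-th factorial moment is recovered as $\E(Z^{(r)})=\frac{d^r}{dz^r}\E(z^Z)\big|_{z=1}$. By \eqref{pgf} the PGF of the TFCPP is $K(z):=\E\big(z^{H_{\beta}(t,\blmd)}\big)=M_{\beta}(f(z))$, where I set $f(z):=t^{\beta}\sum_{j=1}^{\infty}\lambda_{j}(z^{j}-1)$; the crucial feature is that $f(1)=0$. Thus I would evaluate $\E\big(H_{\beta}^{(r)}(t,\blmd)\big)=K^{(r)}(1)$ by differentiating the composite $M_{\beta}\circ f$ exactly $r$ times at $z=1$.

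First I would apply Hoppe's formula \eqref{eqn61} to $K=M_{\beta}(f)$, giving $K^{(r)}=\sum_{k=0}^{r}\frac{1}{k!}M_{\beta}^{(k)}(f)\,A_{k,r}(f)$ with $A_{k,r}$ as in \eqref{eqn62}. Evaluating at $z=1$ and using $f(1)=0$ simplifies both factors, just as in the previous proof. For the coefficients, \eqref{2.5} together with $M_{\alpha,\beta}^{\gamma}(0)=1/\Gamma(\beta)$ gives $\frac{1}{k!}M_{\beta}^{(k)}(0)=M_{\beta,k\beta+1}^{k+1}(0)=1/\Gamma(k\beta+1)$, exactly as in \eqref{em2}. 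For $A_{k,r}(f)=\sum_{m=0}^{k}\binom{k}{m}(-f)^{k-m}(f^{m})^{(r)}$, only the $m=k$ summand survives at $z=1$ (all others carry a positive power of $f(1)=0$), so $A_{k,r}(f)\big|_{z=1}=(f^{k})^{(r)}(1)$.

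Next I would expand $(f^{k})^{(r)}(1)$ by the Leibniz-type identity of Lemma \ref{jl}(ii), obtaining $(f^{k})^{(r)}(1)=\sum_{n_1+\cdots+n_k=r}\binom{r}{n_1,\ldots,n_k}\prod_{i=1}^{k}f^{(n_i)}(1)$. Since $f(1)=0$, every composition containing a part $n_i=0$ drops out, so the sum runs only over compositions of $r$ into $k$ strictly positive parts. The key computation is the surviving derivatives: for $n\ge1$, because $\frac{d^{n}}{dz^{n}}z^{j}\big|_{z=1}=j(j-1)\cdots(j-n+1)$ is the falling factorial and $\mathbb{P}(Y_1=j)=\lambda_j/\dt$, I get $f^{(n)}(1)=t^{\beta}\sum_{j\ge1}\lambda_j\,j(j-1)\cdots(j-n+1)=t^{\beta}\dt\,\E(Y_1^{(n)})$. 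Substituting this together with the coefficient $1/\Gamma(k\beta+1)$, and noting that the $k=0$ term vanishes for $r\ge1$, yields the asserted formula.

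The main point — more conceptual than technical — is the choice of generating function: differentiating $z^{j}$ at $z=1$ produces the falling factorial $j(j-1)\cdots(j-n+1)$ and hence the factorial moments $\E(Y_1^{(n)})$, precisely paralleling how differentiating $e^{-sj}$ at $s=0$ produced $(-1)^{n}j^{n}$ and raw moments in the previous theorem, so the two proofs are structurally identical. I would also flag the one item requiring care: because $f(1)=0$, the inner multinomial sum is over compositions of $r$ into $k$ \emph{strictly positive} parts $n_1,\ldots,n_k\ge1$; this is the intended reading of the displayed sum, and it is exactly what makes the $k$-th summand the genuine $k$-block contribution, so that the $\E(Y^{(0)})$-type terms do not enter.
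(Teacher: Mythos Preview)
Your proposal is correct and follows essentially the same route as the paper: differentiate the PGF \eqref{pgf} via Hoppe's formula, use \eqref{2.5} and $M_{\alpha,\beta}^{\gamma}(0)=1/\Gamma(\beta)$ to evaluate $M_{\beta}^{(k)}(0)$, note that only the $m=k$ term of $A_{k,r}$ survives since $f(1)=0$, and then expand $(f^{k})^{(r)}(1)$ by Lemma~\ref{jl}(ii) with $f^{(n)}(1)=t^{\beta}\dt\,\E(Y_1^{(n)})$. Your explicit remark that the multinomial sum effectively runs over compositions with $n_i\ge 1$ (because $f(1)=0$ kills any part $n_i=0$) is a point the paper leaves implicit but is indeed needed for the displayed formula to be read correctly.
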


\begin{proof}
From \eqref{pgf} and using Hoppe's formula, we get
\begin{align*}
\E(H^{(r)}_{\beta}(t, \blmd))	 &=\Big.\frac{\partial^{r} H_{H_{\beta}(t, \blmd)}(z)}{\partial z^{r}}\Big|_{z=1} \\
	&=\Big.\frac{\partial^{r}}{\partial z^{r}}\Big\lbrace M_{\beta}\Big(t^{\beta} \sum_{j=1}^{\infty}\lambda_{j}\Big(z^{j}-1\Big)\Big)\Big\rbrace \Big|_{z=1} \\
	&=\Big.\sum_{k=0}^{r} \frac{1}{k !} M_{\beta}^{(k)}\Big(t^{\beta} \sum_{j=1}^{\infty}\lambda_{j}\Big(z^{j}-1\Big) \Big) A_{k, r}\Big(t^{\beta} \sum_{j=1}^{\infty}\lambda_{j}\Big(z^{j}-1\Big)\Big)\Big|_{z=1}\numberthis\label{fm1},
\end{align*}
where, for  $ 0 \le k \le r$,
\begin{align*}
	&\Big.A_{k, r}\Big(t^{\beta} \sum_{j=1}^{\infty}\lambda_{j}\Big(z^{j}-1\Big)\Big)\Big|_{z=1} \\
	&\quad=\Big.\sum_{m=0}^{k}\binom{k}{m}\Big(-t^{\beta} \sum_{j=1}^{\infty}\lambda_{j}\Big(z^{j}-1\Big)\Big)^{k-m} \frac{d^{r}}{dz^{r}}\Big(t^{\beta} \sum_{j=1}^{\infty}\lambda_{j}\Big(z^{j}-1\Big)\Big)^{m}\Big|_{z=1} \\
	&\quad=\Big.t^{k \beta} \frac{d^{r}}{dz^{r}}\Big(\sum_{j=1}^{\infty}\lambda_{j}\Big(z^{j}-1\Big)\Big)^{k}\Big|_{z=1}\numberthis\label{fm2}\\
    &=t^{k \beta}\sum_{ n_1+\ldots+ n_{k}=r} \binom{r}{n_1, \ldots, n_k} \prod_{i=1}^{k} \frac{d^{n_{i}}}{d z^{n_{i}}}\Big(\sum_{j=1}^{\infty}\lambda_{j}\Big(z^{j}-1\Big)\Big)\Biggr|_{z=1} \\
    &=t^{k \beta} \sum_{ n_1+\ldots + n_{k}=r} \binom{r}{n_1, \ldots, n_k}  \prod_{i=1}^{k}  \sum_{j=1}^{\infty}\lambda_{j}j(j-1)\cdots (j-n_i+1)\\ 
    &= t^{k \beta} \dt^k \sum_{ n_1+\ldots + n_{k}=r} \binom{r}{n_1, \ldots, n_k}  \prod_{i=1}^{k} 
    \E(Y^{(n_i)}). \numberthis\label{fm3}
\end{align*}

Also, as seen  in \eqref{em2},
\begin{align*}
	\Big.M_{\beta}^{(k)}\Big(t^{\beta} \sum_{j=1}^{\infty}\lambda_{j}\Big(z^{j}-1\Big)\Big)\Big|_{z=1}
	&=\frac{k !}{\Gamma(k \beta+1)}\numberthis\label{fm4}.
\end{align*}
The result now follows by using \eqref{fm2}-\eqref{fm4} in \eqref{fm1}. 
\end{proof}

\subsection{Long Range Dependence Properties}
Here, we  discuss the long-range property (LRD)  of the TFCPP. An important property of the FPP is that it, unlike the classical Poisson process, has the LRD property
as shown in Biard and Saussereau \cite{BDSS14}. This makes the FPP  quite useful
in many applications that arise in finance, econometrics and hydrology.
The LRD property of the 
fractional negative binomial process is studied by Maheshwari and Vellaisamy 
\cite{MV16}.

\vspace{.3cm}
\nin The mean and variance functions  of the  inverse $\beta$-stable subordinator are given by (see Leonenko et al., \cite{LMS14}, Equation (8) and Equation (11))
\begin{equation*}
\mathbb{E}\Big(E_{\beta}(t)\Big)=\frac{t^{\beta}}{\Gamma(\beta+1)}
\end{equation*}
and
\begin{equation*}
\operatorname{Var}\Big(E_{\beta}(t)\Big)=t^{2 \beta} Q(\bt),
\end{equation*}
where $Q(\bt)$ is defined in \eqref{n510}.

\noi Also, for $0<s \leq t$, we have from \eqref{n522} and \eqref{fppcov},
\begin{align*}
\operatorname{Cov}(H_{\beta}(s, \blmd),H_{\beta}(t, \blmd))&=\V (Y_1) qs^{\bt}+\mathbb{E}^2(Y_1) \CV(N_{\bt}(s, \dt), N_{\bt}(t, \dt))\\
&=\V (Y_1) qs^{\bt}+\mathbb{E}^2(Y_1)\Big( qs^\beta+\dt^2 \operatorname{Cov}\Big(E_{\beta}(s), E_{\beta}(t)\Big)\Big)\\
&=qs^{\bt}\mathbb{E}(Y_1^2)+\dt^2\mathbb{E}^2(Y_1)\operatorname{Cov}\Big(E_{\beta}(s), E_{\beta}(t)\Big) \numberthis\label{cov}.
\end{align*}
For large $t$, it is known that (see Leonenko et al. \cite{LMS14})
\begin{equation}\label{cov1}
\operatorname{Cov}\Big(E_{\beta}(s), E_{\beta}(t)\Big) \sim \frac{s^{2 \beta}}{\Gamma(2 \beta+1)}.
\end{equation}
Thus, form  \eqref{cov1} in \eqref{cov}, we get
\begin{equation} \label{eqn73}
	\operatorname{Cov}(H_{\beta}(s, \blmd),H_{\beta}(t, \blmd))\sim \E(Y_1^2) + 
	\dt^2\mathbb{E}^2(Y_1) \frac{s^{2 \beta}}{\Gamma(2 \beta+1)},	 
\end{equation}
as $t\rightarrow\infty$.

\noi Using the above result, we prove the following result.

\begin{theorem} \em
	The TFCPP $ H_{\beta}(t, \blmd)$ possesses the LRD property.
\end{theorem}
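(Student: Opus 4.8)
The plan is to verify the defining asymptotic relation for the LRD property directly from the second-order structure already computed, with decay exponent $\gamma=\beta\in(0,1)$. First I would fix $s>0$ and decompose the correlation as
\[
\operatorname{Corr}\big(H_{\beta}(s,\blmd),H_{\beta}(t,\blmd)\big)
=\frac{\operatorname{Cov}\big(H_{\beta}(s,\blmd),H_{\beta}(t,\blmd)\big)}
{\sqrt{\V(H_{\beta}(s,\blmd))}\,\sqrt{\V(H_{\beta}(t,\blmd))}},
\]
treating the numerator and the two variance factors separately and tracking only their $t$-dependence as $t\to\infty$.

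For the numerator I would invoke the covariance asymptotic \eqref{eqn73}, which shows that $\operatorname{Cov}(H_{\beta}(s,\blmd),H_{\beta}(t,\blmd))$ converges, as $t\to\infty$, to the strictly positive constant $c_1(s):=q s^{\beta}\E(Y_1^2)+\dt^2\E^2(Y_1)s^{2\beta}/\Gamma(2\beta+1)$. The factor $\sqrt{\V(H_{\beta}(s,\blmd))}$ is a fixed positive number independent of $t$, so the entire $t$-dependence of the rate is carried by $\sqrt{\V(H_{\beta}(t,\blmd))}$. To extract its growth I would use \eqref{519b}: since $\V(H_{\beta}(t,\blmd))=\E(Y_1^2)q t^{\beta}+\E^2(Y_1)\dt^2 Q(\beta)t^{2\beta}$ and $2\beta>\beta$, the term of order $t^{2\beta}$ dominates, whence
\[
\V\big(H_{\beta}(t,\blmd)\big)\sim \E^2(Y_1)\dt^2 Q(\beta)\,t^{2\beta},
\qquad t\to\infty .
\]
Because $Q(\beta)>0$ for $\beta\in(0,1)$ (as already noted in the over-dispersion remark following \eqref{n510}), the leading coefficient is positive, so $\sqrt{\V(H_{\beta}(t,\blmd))}\sim \E(Y_1)\dt\sqrt{Q(\beta)}\,t^{\beta}$.

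Combining the three ingredients gives
\[
\operatorname{Corr}\big(H_{\beta}(s,\blmd),H_{\beta}(t,\blmd)\big)
\sim
\frac{c_1(s)}{\sqrt{\V(H_{\beta}(s,\blmd))}\,\E(Y_1)\dt\sqrt{Q(\beta)}}\;t^{-\beta}
=:c(s)\,t^{-\beta},\qquad t\to\infty,
\]
with $c(s)>0$. Since the exponent is $\gamma=\beta\in(0,1)$, the process meets the LRD criterion in the definition preceding this subsection, which proves the claim. The only genuinely delicate point is the denominator bookkeeping: one must confirm that the $t^{2\beta}$ term, not the $t^{\beta}$ term, controls $\V(H_{\beta}(t,\blmd))$ for large $t$ and that its coefficient is strictly positive — both follow at once from $0<\beta<1$ and $Q(\beta)>0$. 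Everything else is a routine limit computation, so I do not anticipate any substantive obstacle.
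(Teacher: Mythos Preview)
Your proposal is correct and follows essentially the same route as the paper: both use the covariance asymptotic \eqref{eqn73} for the numerator, the variance formula \eqref{519b} to see that $\V(H_{\beta}(t,\blmd))\sim \E^2(Y_1)\dt^2 Q(\beta)t^{2\beta}$, and then read off the decay exponent $\gamma=\beta\in(0,1)$. If anything, your version is slightly more explicit about why the $t^{2\beta}$ term dominates and why $Q(\beta)>0$ matters, and your constant $c_1(s)=qs^{\beta}\E(Y_1^2)+\dt^2\E^2(Y_1)s^{2\beta}/\Gamma(2\beta+1)$ is the correct limit of the covariance (the paper's display \eqref{eqn73} drops the factor $qs^{\beta}$ in front of $\E(Y_1^2)$, which appears to be a typo).
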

\begin{proof}
	For fixed $s>0$ and large $t$, we have, from \eqref{eqn73}  and \eqref{519b},
\begin{align*}
	\operatorname{Corr}\big(H_{\beta}(s, \blmd),~&H_{\beta}(t, \blmd)\big)\\
	&=\frac{\operatorname{Cov}\Big(H_{\beta}(s, \blmd),H_{\beta}(t, \blmd)\Big)}{\sqrt{\operatorname{Var}\Big(H_{\beta}(s, \blmd)\Big)} \sqrt{\operatorname{Var}\Big(H_{\beta}(t, \blmd)\Big)}}\\
    & \sim \frac{\E(Y_1^2) + 
    	\dt^2\mathbb{E}^2(Y_1) \frac{s^{2 \beta}}{\Gamma(2 \beta+1)}}
        { \Big\{ H_{\beta}(s, \blmd) \Big[ \mathbb{E}(Y_1^2)qt^\beta  +\dt^2 \mathbb{E}^2(Y_1) t^{2\bt} Q(\bt)  \Big]  \Big\}^{1/2} } \\
	&\sim Z(s)t^{-\beta}, ~~\text{for ~ large}~ t,
\end{align*}
where 
\begin{equation*}
	Z(s)=  \frac{\E(Y_1^2) + 
		\dt^2\mathbb{E}^2(Y_1) \frac{s^{2 \beta}}{\Gamma(2 \beta+1)}}
	{ \Big\{\dt^2 H_{\beta}(s, \blmd)  \mathbb{E}^2(Y_1)  Q(\bt)   \Big\}^{1/2} }.
\end{equation*}
Since $0<\beta<1$, the  TFCPP  possesses the LRD property.
\end{proof}

\begin{remark} \em
For a fixed $h > 0$, the increment of TFCPP is defined as  
\[
H^h_\beta(t; \tau) := H_\beta(t +h, \lmd) - H_\beta(t; \lmd), \quad t \geq 0.
\]  
It can be shown that the increment process $\{H^h_\beta(t, \beta)\}_{t \geq 0}$ exhibits the SRD property.  
The proof is similar  to that of Theorem 1 of Maheshwari and Vellaisamy \cite{MV16} and  hence is omitted.
\end{remark}

\vspace{.3cm}
\noi Next, we obtain the fractional Kolmogorov forward type equations for the $PMF$ of  $H_{\beta}(t, \blmd).$
Let
\begin{align*}
	\mathbb{P}\Big\{H_{\beta}(t, \blmd)=n\Big\}&=q(n|t, \blmd), \quad n \ge 0, \\	
    \mathbb{P}(T_k=n) &= h_k(n), \quad n \ge 1,
\end{align*}
be respectively the PMF of the TFCPP and $T_k= \sum_{j=1}^{k} Y_j,  k \ge 1.$

\begin{theorem} \em Let $0 < \bt \le 1$  and  $\partial_t^{\bt}$ be the  Caputo-fractional derivative defined in \eqref{cd}. The $PMF$  $q(n|t, \lmd)$ of the TFCPP  satisfies the following fractional differential
	equation:
	 \begin{equation} \label{neqn540}
		\partial_t^{\beta} q(n|t,\blmd)= \begin{cases}
			-\dt p_{_{\beta}}(0|t,\blmd),  \quad \text{if}~~  n=0, \\
			-\dt q(n|t,\blmd) + \dt \sum_{k=1}^{n} h_k(n) p_{\bt}(k-1|t, \dt), 
			\quad \text{if}~~ n \ge 1,
		\end{cases}
	\end{equation}
where $p_{\bt}(n|t, \lmd) $ denotes the $PMF$ of the fractional Poisson process.
\end{theorem}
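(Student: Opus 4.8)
The plan is to reduce the statement to the known governing equation \eqref{neqn56} for the time-fractional Poisson process by conditioning on the number of jumps. Since $H_{\beta}(t,\blmd)\st\sum_{j=1}^{N_{\beta}(t,\dt)}Y_j$ with $\{Y_j\}$ independent of $N_{\beta}(t,\dt)$, conditioning on $N_{\beta}(t,\dt)=k$ gives
\[
q(n\mid t,\blmd)=\sum_{k=0}^{\infty}h_k(n)\,p_{\bt}(k\mid t,\dt),
\]
where $h_k(n)=\mathbb{P}(T_k=n)$ and $p_{\bt}(k\mid t,\dt)$ is the PMF of $N_{\beta}(t,\dt)$. The key observation is that, because each $Y_j\ge 1$, we have $T_k\ge k$, so $h_k(n)=0$ whenever $k>n$; thus the sum is in fact finite, running over $1\le k\le n$ for $n\ge 1$ (the term $k=0$ drops since $h_0(n)=\mathbb{P}(T_0=n)=0$), and it reduces to $q(0\mid t,\blmd)=p_{\bt}(0\mid t,\dt)=M_{\beta}(-\dt t^{\beta})$ when $n=0$. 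This finiteness is what removes any question of interchanging the fractional derivative with an infinite series.

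Next I would apply the Caputo derivative $\partial_t^{\beta}$ termwise. Because the coefficients $h_k(n)$ are constants in $t$ and the sum is finite, for $n\ge 1$ we have $\partial_t^{\beta}q(n\mid t,\blmd)=\sum_{k=1}^{n}h_k(n)\,\partial_t^{\beta}p_{\bt}(k\mid t,\dt)$. Substituting the $k\ge 1$ branch of \eqref{neqn56}, namely $\partial_t^{\beta}p_{\bt}(k\mid t,\dt)=-\dt\bigl[p_{\bt}(k\mid t,\dt)-p_{\bt}(k-1\mid t,\dt)\bigr]$, splits the right-hand side as
\[
\partial_t^{\beta}q(n\mid t,\blmd)=-\dt\sum_{k=1}^{n}h_k(n)\,p_{\bt}(k\mid t,\dt)+\dt\sum_{k=1}^{n}h_k(n)\,p_{\bt}(k-1\mid t,\dt).
\]
Recognizing the first sum as $q(n\mid t,\blmd)$ (using $h_0(n)=0$ for $n\ge 1$ to restore the range to $0\le k\le n$) yields exactly the stated equation. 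The case $n=0$ is then immediate: differentiating $q(0\mid t,\blmd)=p_{\bt}(0\mid t,\dt)$ and invoking the $k=0$ branch of \eqref{neqn56} gives $\partial_t^{\beta}q(0\mid t,\blmd)=-\dt\,p_{\bt}(0\mid t,\dt)$.

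I do not expect a serious analytic obstacle: the entire argument hinges on the positivity $Y_j\ge 1$, which truncates the defining series into a finite linear combination with constant coefficients and lets $\partial_t^{\beta}$ pass through harmlessly. The only points requiring care are the index bookkeeping — confirming $h_k(n)=0$ for $k>n$ and $h_0(n)=0$ for $n\ge 1$, so that the first sum reassembles cleanly into $q(n\mid t,\blmd)$ — and matching the notation $p_{\bt}(0\mid t,\blmd)$ appearing in the $n=0$ branch of the statement with the quantity $p_{\bt}(0\mid t,\dt)=M_{\beta}(-\dt t^{\beta})$ produced by the computation.
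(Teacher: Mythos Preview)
Your proposal is correct and follows essentially the same route as the paper: write $q(n\mid t,\blmd)$ as the finite sum $\sum_{k=1}^{n}h_k(n)\,p_{\bt}(k\mid t,\dt)$ (using $Y_j\ge 1$ to truncate), apply $\partial_t^{\beta}$ termwise, invoke \eqref{neqn56}, and regroup. Your write-up is in fact a bit more careful than the paper's in justifying the truncation and the index bookkeeping.
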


\begin{proof}  First note that 
\begin{equation*} 
q(0|t,\blmd)= \mathbb{P}\Big\{H_{\beta}(t, \blmd)=0\Big\}=p_{\bt}(0|t, \dt)	
\end{equation*}
and so 
\begin{equation*} 
\partial_t^{\beta}	q(0|t,\blmd)= \partial_t^{\beta}p_{\bt}(0|t, \dt)= - \dt p_{\bt}(0|t, \dt),	
\end{equation*}
which follows form \eqref{neqn56}.\\
When $n \ge 1$,
\begin{align*}
q(n|t,\blmd)= & \sum_{k=1}^{n} \mathbb{P}(S_k=n) \mathbb{P}(N_{\bt}(t, \dt)=k  ) \\	
          = & \sum_{k=1}^{n} h_k(n) p_{\bt}(k|t, \dt).
\end{align*}
This implies, using \eqref{neqn56},
\begin{align*}
	\partial_t^{\beta}q(n|t,\blmd)
	= & \sum_{k=1}^{n} h_k(n) \partial_t^{\beta} p_{\bt}(k|t, \dt) \\
    = & -\dt \sum_{k=1}^{n} h_k(n)\big[p_{\bt}(k|t, \dt)-p_{\bt}(k-1|t, \dt)\big] \quad 
    (\text{using \eqref{neqn56}})\\
    = & -\dt q(n|t,\blmd) + \dt \sum_{k=1}^{n} h_k(n)p_{\bt}(k-1|t, \dt), 
\end{align*}
where $p_{\bt}(n|0, \dt)= 1 ~\text{if}~ n=0$ and is zero ~\text{if}~ $  n \ge 1.$ This proves the result.
\end{proof}

\nin Similar results for $H_{\beta}(t, \blmd^{(k)})$ can be obtained by replacing $\dt$ by $\dt_k.$

\subsection{Hitting Times of $ H_{\beta}(t, \blmd)$} The first hitting times of $ H_{\beta}(t, \blmd)$ if defined by
\begin{equation} \label{n541}
	T_k^{\beta} = \inf \{ t \ge 0:  H_{\beta}(t, \blmd) \ge k\}.
\end{equation}
Then, from \eqref{n522},
\begin{align*}
  \mathbb{P}(T_k^{\beta}<s)=& \, \mathbb{P}(H_{\beta}(s, \blmd) \ge k) \\
                 =& \sum_{m=k}^{\infty}\mathbb{P}(H_{\beta}(s, \blmd)=m) \\
                 =&\sum_{m=k}^{\infty} \frac{1}{m!} \mathbb{E} \Big({e^{-\dt E_{\beta}(s)}}    {B}_{n}\big(u_1 E_{\beta}(s), u_2 E_{\beta}(s), \ldots, u_n E_{\beta}(s) \big) \Big)\\
                 =& \, \ell(s), ~ (\text{say}).
\end{align*}
Then the density of $T_k^{\beta}$ of $ f_{T_k^{\beta}}(s)= \ell^{'}(s).$

\subsection{Application to Risk Modeling} The TFCPP $H_{\beta}(t, \blmd)$ can serve
an important stochastic model for several disciplines. Note that integer-valued Levy process have been used in financial
econometrics (see Ole Bandorff-Nielsen et al. \cite {OBN12}). 
 As the TFCPP has the long range dependence, over-dispersion property, flexibility of the claim size distribution, it can be a more realistic and useful model for financial and insurance modeling and  risk theory.\\ 
 Following Beghin and Macci \cite{BegMac14}, consider the risk model  
\begin{equation} \label{n542}
	R_{\beta}(t)= \, ct - \sum_{j=1}^{N_{\beta}(t, \dt)} Y_j=ct-H_{\beta}(t, \blmd),\\  
\end{equation}
where $c$ denotes the constant premium rate. Here $ N_{\beta}(t, \dt) $ denotes the number of claims at time $t$ and $Y_j$'s represent discrete claim sizes which are assumed to be independent of $ N_{\beta}(t, \dt). $   

\nin The expected value of $ R_{\beta}(t) $ is
\begin{align*}
	\mathbb{E}(R_{\beta}(t))=& \, ct- \mathbb E( H_{\beta}(t, \blmd)) \\
	                   = & \, ct - q t^{\beta} \mathbb{E}(Y_1),
\end{align*}
using \eqref{519a} and  from \eqref{519b}
\begin{align*}
	\operatorname{Var}(R_{\beta}(t))=& \operatorname{Var}( H_{\beta}(t, \blmd))=
	\mathbb{E}(Y_1^2)qt^\beta  + \mathbb{E}^2(Y_1)\lmd^2 t^{2\bt} Q(\bt).
\end{align*}
Note that the claim-size distribution of $Y_1$ is given in \eqref{neqn32} and so $\mathbb{E}(Y_1)$ and $\mathbb{E}(Y_1^2)$ can be computed easily.

\begin{remark} \em Let $D_{\beta}(t)$ be the stable subordinator and  $ {\tilde N}_{\beta}(t, \dt)=N(D_{\bt}(t), \dt)  $ be the space fractional
	Poisson process. 
	One could study, similar to the TFCPP discussed in this paper, the space fractional compound
	Poisson process  (see, for example, Orsingher and Polito \cite{OrsPoli12}, Begin and Vellaisamy \cite{BegVel18}) defined by
	\begin{equation} \label{n540}
		S_{\beta}(t, \blmd)\st \sum_{j=1}^{\tilde{N}_{\beta}(t, \dt)}Y_j \st G_p(D_{\bt}(t), \dt),
	\end{equation}
	by subordinating the generalized Poisson process $G_P(t, \blmd)$ to the stable subordinator $D_{\beta}(t, \blmd)$ and derive the corresponding results. In addition, the study of time-changed versions of such subordinated
	processes also could be of interest.
\end{remark}


\begin{thebibliography}{99}


\bibitem{AB09}
D.~Applebaum,
\newblock {\em L\'evy Processes and Stochastic Calculus}, 
\newblock second edition, Cambridge University Press, Cambridge, 2009.

\bibitem{askey}
G. E.~Andrews, R. Askey \and R. Roy,
\newblock {\em Special Functions}, 
\newblock Cambridge University Press, Cambridge, 1999.


\bibitem{Beg13}
L.~Beghin,
\newblock {\it Fractional gamma processes and fractional gamma-subordinated
	processes,}
\newblock arXiv:1305.1753 [math.PR] (2013).

\bibitem{BegMac14}
L.~Beghin \and C.~Macci,
\newblock {\it Fractional discrete processes: compound and mixed Poisson
	representations,}
\newblock J. Appl. Probab. 51(1) (2014), pp. 9--36.

\bibitem{BegOrs09}
L.~Beghin \and E.~Orsingher,
\newblock {\it Fractional {P}oisson processes and related planar random motions},
\newblock Electron. J. Probab. 14 (2009), pp. 1790--1827.

\bibitem{BegVel18}
L. ~Beghin \and P. Vellaisamy,
\newblock {\it Space-fractional versions of the negative binomial and Polya-type processes},
\newblock Methodol. Comput. Appl. Probab. 20 (2018), No. 2, 463–485.

\bibitem{BegMac16}
L. ~Beghin \and C. Macci
\newblock {\it Multivariate fractional Poisson processes and compound sums},
\newblock Adv. in Appl. Probab. 48 (2016), No. 3, 691–711.

\bibitem{BegMac22}
L. ~Beghin \and C. Macci
\newblock {\it  Non-central moderate deviations for compound fractional Poisson processes},
\newblock Statist. Probab. Lett. 185 (2022), Paper No. 109424, 8 pp.





\bibitem{BDSS14}
R.~Biard \and B.~Saussereau,
\newblock {\it Fractional Poisson process: long-range dependence, applications in
	ruin theory},
\newblock J. Appl. Probab. 51(3) (2014), pp. 727--740.







\bibitem {CT74}
L. Comtet,
{\it Advanced Combinatorics: The Art of Finite and Infinite Expansions}. Dordrecht: D. Reidel Publishing Co., 1974.

\bibitem {CJ73} P. C. Consul and C. G. Jain,
\newblock {\it A generalization of the Poisson distribution},
\newblock Technometrics 15 (1973), 791–799.



\bibitem{erde3}
A.~Erd{\'e}lyi, W.~Magnus, F.~Oberhettinger \and F.~G. Tricomi,
\newblock {\it Higher Transcendental Functions},
\newblock McGraw-Hill Book Company Inc., {V}ol. {III}, New York-Toronto-London, 1955.





   
  \bibitem{GOS17} R. Gara, E. Orsingher \and M. Scanivo,
  \newblock {\it Some probabilistic properties of fractional point processes},
   \newblock Stoch. Anal. Appl. 35 (2017), no. 4, 701–718.
   
   

\bibitem{KV18} K. ~K. Kataria,  \and ~P. Vellaisamy,
\newblock {\it On densities of the product, quotient and power of independent subordinators},
\newblock  J. Math. Anal. Appl. 462 (2018), pp. 1627–1643.

\bibitem{KV19} K. ~K. Kataria,  \and ~P. Vellaisamy,
\newblock {\it On the convolution of Mittag-Leffler distributions and its applications to fractional point processes},
\newblock  Stoch. Anal. Appl., No. 1 (2019), pp. 115–122

\bibitem{KKMK21}  K. ~K. Kataria,  \and M. Khandakar,
 \newblock {\it Convoluted fractional Poisson process},
\newblock  ALEA Lat. Am. J. Probab. Math. Stat., 18 (2021), No. 2, 1241–1265.
 
\bibitem{KKMK22}  K. ~K. Kataria,  \and M. Khandakar,
\newblock {\it Generalized fractional counting process},
\newblock J. Theoret. Probab. 35 (2022), no. 4, 2784–2805.

\bibitem{KKV25} M. Khandakar, V. Kumar \and P. Vellaisamy. {\it
Extension of fractional Poisson process, birth process and death
process}.
\newblock  J. Math. Anal. Appl. 545 (2025), 129182.

\bibitem{KST06}
A.~A. Kilbas, H.~M. Srivastava \and J.~J. Trujillo,
\newblock {\it Theory and Applications of Fractional Differential Equations},
\newblock Elsevier Science B.V., Amsterdam, 2006.

\bibitem{kozubo}
T.~J. Kozubowski \and K.~Podg{\'o}rski,
\newblock {\it Distributional properties of the negative binomial {L}\'evy process},
\newblock Probab. Math. Statist. 29 (2009), pp. 43--71.

\bibitem{Lask03}
N.~Laskin,
\newblock {\it Fractional {P}oisson process},
\newblock Commun. Nonlinear Sci. Numer. Simul. 8 (2003), pp. 201--213.

\bibitem{Lask09}
N.~Laskin,
\newblock {\it Some applications of the fractional {P}oisson probability
	distribution},
\newblock J. Math. Phys. 50 (2009), pp. 113513.

\bibitem{LMS14}
N.~N. Leonenko, M.~M. Meerschaert, R.~L. Schilling and A.~Sikorskii,
\newblock {\it Correlation structure of time-changed L\'evy processes},
\newblock Commun. Appl. Ind. Math. 6 (1) (2014), p. e-483.


\bibitem{MGS04}
F.~Mainardi, R.~Gorenflo \and E.~Scalas,
\newblock {\it A fractional generalization of the {P}oisson processes},
\newblock Vietnam J. Math. 32 (2004), pp. 53--64.



\bibitem{MNV11}
M.~M. Meerschaert, E.~Nane \and P.~Vellaisamy,
\newblock { \it The fractional {P}oisson process and the inverse stable subordinator},
\newblock  Electron. J. Probab. 3 (2011), pp. 1600--1620.

\bibitem{MNV09}
M.~M. Meerschaert, E.~Nane \and P.~Vellaisamy,
\newblock { \it Fractional {C}auchy problems on bounded domains },
\newblock  Ann. Probab. 3 (2009), pp. 979--1007.





\bibitem{MS13}
M.~M. Meerschaert \and P.~Straka,
\newblock {\it Inverse stable subordinators},
\newblock Math. Model. Nat. Phenom. 8 (2013), pp. 1--16.

\bibitem{OrsPoli12}
E.~Orsingher \and F.~Polito,
\newblock {\it The space-fractional {P}oisson process},
\newblock Statist. Probab. Lett. 82 (2012), pp. 852--858.


\bibitem{ST99}
K. Sato,
\newblock {\it L\'evy {P}rocesses \and {I}nfinitely {D}ivisible
	{D}istributions}, 
\newblock Cambridge University Press, Cambridge, 1999.

\bibitem{SFT96}
N.~M. Temme,
\newblock {\it Special functions},
\newblock John Wiley \& Sons Inc., New York, 1996.


\bibitem{MV16}
A. Maheshwari, \and P. Vellaisamy,  {\it On  the 
	long range dependence of fractional Poisson and negative binomial
	processes}. J. Appl. Prob. 37 (2016), pp. 989-1000.


\bibitem{MV19}
A. Maheshwari,  \and P. Vellaisamy. {\it Fractional Poisson process time-changed by Lévy subordinator and its inverse}. J. Theoret. Probab. 32 (2019), pp. 1278–-1305

\bibitem{MV19a}
A. Maheshwari, \and  P. Vellaisamy,  {\it Non-homogeneous space-time fractional Poisson processes}. Stoch. Anal. Appl. 37 (2019), pp. 137–154.

\bibitem{OBN12}
Ole E Barndorff-Nielsen,  David G. Pollard  \and Shephard, Neil
\newblock  { \it Integer-valued Lévy processes and low latency financial econometrics}.
Quant. Finance 12 (2012), No. 4, 587–605.


\bibitem{Philip88} 
A. N. Philippou, D. L. Antzoulakos \and
G. A. Tripsiannis, \textit{Multivariate distributions of order $k$.}
Statist. Probab. Lett. \textbf{7} (1988), 207--216.



\bibitem{Pil90}
R. N. Pillai, 
\newblock{\it On Mittag-Leffler functions and related distributions}. Ann. Inst. Statist. Math. 1 (1990),  pp. 157–161. 



\bibitem{PB71} T. R. Prabhakar,
\newblock{\it A singular integral equation with a generalized Mittag-Leffler function in the Kernel}. Yokohama Math. J. (1971),   pp. 7–15.



\bibitem{John02}
W. P. Johnson (2002) {\it The curious history of Faà di Bruno's Formula}, \textit{The
American Mathematical Monthly}.109(3),217--234.

\bibitem{AN23}
A. S. Sengar,  \and N. S. Upadhye. {\it 
	Convoluted fractional Poisson process of order k.}
Stochastics 95 (2023), no. 7, 1170–1191.
 
\bibitem{SVH04}
F. W. Steutel and K. Van Harn. {\it Infinite Divisibility of Probability Distributions on the Real Line}. Marcel Dekker, New York, (2004).

 
 \bibitem{KVV22}
 K. K. Kataria, P. Vellaisamy, \and V. Kumar. {\it A probabilistic interpretation of the Bell polynomials.}  {Stochastic Analysis and Applications}, Stoch. Anal. Appl. 4 (2022), pp. 610-622.
 
 \bibitem{PVAM18}
 P. Vellaisamy  \and A. Maheshwari. {\it Fractional negative binomial and {P}\'{o}lya processes}. Probab. Math. Statist. 38 (2018), pp. 77–101. DOI: {10.19195/0208-4147.38.1.5}
 
 
 \bibitem{PVFV20}
 P. Vellaisamy  \and F. Viens. {\it A probabilistic approach to Adomian polynomials.}
 Stoch. Anal. Appl. 38 (2020), no. 6, 1045–1062.
 
 \bibitem{PVVK22}
 P. Vellaisamy  \and V. Kumar.  {\it Probabilistic interpretations of nonclassic Adomian polynomials}.
 Stoch. Anal. Appl. 40 (2022), no. 5, 931–950.





\end{thebibliography}
\end{document}